\newtheorem{Thm}{Theorem}[section]
\newtheorem{Lem}[Thm]{Lemma}
\newtheorem{Prop}[Thm]{Proposition}
\newtheorem{Cor}[Thm]{Corollary}
\theoremstyle{definition}
\newtheorem{Def}[Thm]{Definition}
\newtheorem{remark}[Thm]{Remark}
\newtheorem{Exa}[Thm]{Example}
\newcommand{\R}{{\mathbb{R}}}
\newcommand{\C}{{\mathbb{C}}}
\newcommand{\D}{{\mathbb{D}}}
\newcommand{\Q}{{\mathbb{Q}}}
\newcommand{\N}{{\mathbb{N}}}
\renewcommand{\H}{{\mathbb{H}}}
\let\Mob\phi
\let\z z
\begin{document}

\title[]{Characterizations of the maximum likelihood estimator of the Cauchy distribution}

\author[K. Okamura]{Kazuki Okamura}
\address{Department of Mathematics, Faculty of Science, Shizuoka University}
\email{okamura.kazuki@shizuoka.ac.jp}

\author[Y. Otobe]{Yoshiki Otobe}
\address{Department of Mathematics, Faculty of Science, Shinshu University}
\email{otobe@math.shinshu-u.ac.jp}

\subjclass[2000]{62F10, 30C80, 12F10}
\keywords{point estimation; Cauchy distribution; maximum likelihood estimation}
%\date{\today}
\dedicatory{}

\maketitle

\begin{abstract}
This paper gives a new approach for the maximum likelihood estimation of the joint of the location and scale of the Cauchy distribution. We regard the joint as a single complex parameter and derive a new form of the likelihood equation of a complex variable. Based on the equation, we provide a new iterative scheme approximating the maximum likelihood estimate.  We also handle the equation in an algebraic manner and derive a polynomial containing the maximum likelihood estimate as a root. This algebraic approach provides another scheme approximating  the maximum likelihood estimate by root-finding algorithms for polynomials, and furthermore, gives non-existence of closed-form formulae for the case that the sample size is five. We finally provide some numerical examples to show our method is effective.
\end{abstract}

\section{Introduction}\label{sec:introduction}

In both theoretical and practical fields in statistical studies,
the Gaussian distribution is frequently used because it is mathematically
understood very well.
However, it is not always clear why the sample obey the Gaussian law.
For example,
it is hardly ever possible to guarantee that hypotheses for the central limit
theorem are fulfilled in the real world.
It is also worthy to point out that Gaussian samples almost never allow the
existence of outliers.
One possible way to overcome this state of affairs,
and that we propose in the present paper, is to use the
Cauchy distribution instead.
The Cauchy distribution as well as the Gaussian is stable and
bell-shaped distribution that has a probability density function
controlled by two parameters: the location parameter $\mu \in \R$
and the scale parameter $\sigma > 0$.
Moreover, samples from the Cauchy distribution almost concentrate
in an area around the location $\mu$ except outliers. 
So it is natural to build statistical models using the Cauchy distribution.
But unfortunately, at least from a mathematical point of view,
it also yields a sensitive issue, 
which is the Cauchy distribution is not integrable.
Because of such weak integrability,
it is hard to handle it in a probability theoretic manner. 
Only few theories of the unbiased estimators for the Cauchy distribution have been known,
see e.g., \cite{McCullagh1993} and \cite{Rothenberg1964}.
Let us mention that, in this direction, a series of articles
\cite{Akaoka2021-2,Akaoka2021-3,Akaoka2021-1}
introduced and analyzed unbiased estimators
for the Cauchy distributions.

Another well-studied estimator is the maximum likelihood estimator  based on the probability density functions. 
However, 
the maximal likelihood estimator for the Cauchy distribution 
is not easy to analyze in the case that neither the location nor the scale is known. 
The existence and uniqueness of
the maximum likelihood estimate was shown by Copas \cite{Copas1975}. 
Ferguson \cite{Ferguson1978} gave closed-form formulae for both of the location and scale parameters
$\mu$ and $\sigma$ by ordering the sample, when the size of the sample was three or four. 
But the derivation of the formulae was omitted there. 
Later McCullagh \cite{McCullagh1996} gave derivations of them by geometric considerations. 
To our knowledge, the existence or nonexistence of explicit formulae has been known when the sample size is bigger than four.

Under the circumstances, it is interesting to find nice numerical techniques to find the maximum likelihood estimates of samples from  the Cauchy distribution.
Haas, Bain and Antle \cite{Haas1970} and Hinkley \cite{Hinkley1978} used the Newton--Raphson method to compute the maximum likelihood estimates.
We note that, though the Newton--Raphson method works well in many cases,
the convergence is not guaranteed in general and the algorithm could diverge. 

The purpose of the present paper is to broaden and deepen these studies. 
We will introduce a new iterative scheme (dynamical system) for the maximum likelihood estimate of the joint of the location and scale parameters, which converges from every starting point in the upper-half plane. 
The argument is not only simple but asserts that the estimate is stable with respect to small perturbations of the sample. 
We will describe them in Section \ref{sec:fixed}.
The argument  there is also applicable to the wrapped (circular) Cauchy distribution on the circle, 
which is an easier and different approach from Kent and Tyler's \cite{Kent1988}. 

In Section \ref{sec:poly}, we also provide  a polynomial  one of which roots is the  maximum likelihood estimate of the joint of the location and scale parameters.
By using the polynomial, we can derive Ferguson's formulae \cite{Ferguson1978} easily
when the sample size is three or four, 
and moreover we will analyze its algebraic structure, specifically the Galois group, 
when the sample size is bigger than four. 
Our method is largely different from McCullagh's one \cite{McCullagh1996}. 
With the help of a computer algebra system,
it can be proved there are no algebraic closed form formulae for the maximal likelihood estimates
when $n = 5, 6$ and $7$. 
We conjecture that 
there is no algebraic representation of the maximum likelihood estimator
when the size of the sample is bigger than $4$.

The key idea is to regard the location and scale parameters of the Cauchy distribution as a single complex parameter;
$\theta := \mu + i \sigma \in \H$, 
where $\H$ is the upper-half complex plane.
Such parameterization was first considered by McCullagh \cite{McCullagh1996}
in the context of invariance of the law of Cauchy random variables under the M\"obius transformations. 
We begin our discussions with the likelihood equations of the Cauchy distribution under the complex parametrization 
in Section \ref{sec:framework}. 
It enables us to derive some new probabilistic
properties concerning the maximum likelihood estimate of the Cauchy distribution,
which are summarized in Section \ref{sec:properties}.

Our results are applicable to
numerical computations for the maximum likelihood estimates. 
Two approximation methods have been developed in Sections \ref{sec:fixed} and \ref{sec:poly}. 
One is the iterative scheme appearing in Section \ref{sec:fixed} and the other is to apply root-finding algorithms for polynomials to the polynomial containing the maximum likelihood estimate as a root appearing  in Section \ref{sec:poly}. 
We emphasize that those work well even for samples
for which numerical approximation schemes such as the Newton-Raphson method or the Nelder-Mead method  fail.

\section{Complex parametrization of the likelihood equations}\label{sec:framework}

We are given a real sequence of independent observed sample
$\{x_1, x_2, \ldots, x_n\}$, $n \geq 3$, from a Cauchy distribution
$C(\mu, \sigma)$ with a scale parameter $\mu \in \R$ and
a location parameter $\sigma > 0$; both are unknown.
Here, we say a random variable $X$ obeys a Cauchy distribution $C(\mu, \sigma) \equiv C(\theta)$, $\theta = \mu + i \sigma$,
if the probability density function of $X$ is given by
\begin{equation}\label{eq:Cauchy-density}
f(x; \mu, \sigma) \equiv f(x; \theta) = 
\frac{\sigma}{\pi} \frac{1}{(x-\mu)^2 + \sigma^2}
= \frac{1}{2\pi i} \left( \frac{1}{x - \theta} - \frac{1}{x - \overline{\theta}} \right),
\end{equation}
where $i$ denotes an imaginary unit and $\overline{\theta}$ denotes
the complex conjugate of $\theta \in \C$, where $\C$ is the complex plane.
We will denote by $\Re(\theta)$ and $\Im(\theta)$ the
real part and the imaginary part, respectively,
that is, $\Re(\theta) = \mu$ and $\Im(\theta) = \sigma$.
Then, the likelihood function for this sample is defined by
\begin{equation}\label{eq:likelihood-function}
L(\theta) \equiv L(\theta; x_1, x_2, \ldots, x_n)
:= \prod_{j=1}^n f(x_j; \theta).
\end{equation}

Our aim in the present paper is to consider the maximum likelihood estimate 
$\hat{\theta}$ for $\{x_1, x_2, \ldots, x_n\}$, which is the maximizer of
the likelihood function, defined through
\begin{equation}\label{eq:def-MLE}
\hat{\theta} \equiv
\hat{\theta}(x_1, x_2, \ldots, x_n)
:= \mathrm{arg}\,\max_{\theta \in \C} \{L(\theta;x_1,x_2,\ldots,x_n)\}.
\end{equation}
Copas \cite{Copas1975} showed that the maximizer for the likelihood 
function of the Cauchy distribution is unique if $n \geq 3$;
hence $\hat{\theta}$ is well-defined. 

\begin{remark}
Introducing a function $\delta_x(\theta) := \frac{1}{2\pi i}\frac{1}{x - \theta}$, the right hand side of \eqref{eq:Cauchy-density} becomes
$\delta_x(\theta) + \overline{\delta_x(\theta)}$.
It is from this representation clear that \eqref{eq:Cauchy-density}
is real valued.
\end{remark}

As we already mentioned in the introduction, 
our main idea is to
regard the parameters of the Cauchy distribution as a single complex
number. 
It means that
we will find the maximizer $\hat{\theta}$ 
of \eqref{eq:likelihood-function} in the
upper-half plane $\H := \{\theta \in \C;
\Im(\theta) > 0\}$ of the complex plane,
while the likelihood function $L(\theta)$ is a real-valued
function defined on $\H$.
In this manner, since $\Re(\theta) = \mu$ and $\Im(\theta) = \sigma$, 
we will automatically
find both the scale and location parameters simultaneously.

To derive the likelihood equation
to which the maximizer of \eqref{eq:likelihood-function} is the solution,
it is convenient to introduce the following
polynomial which will play a central role in the present paper:
\begin{equation}\label{eq:def-p}
h(\theta) \equiv h(\theta; x_1, x_2, \ldots, x_n) := \prod_{j=1}^n (x_j - \theta).
\end{equation}

We will find a stationary point of the likelihood function \eqref{eq:likelihood-function} with respect to $\theta$.

\begin{Prop}
The maximum likelihood estimate $\hat{\theta}$ for
the observed sample $x = (x_1, x_2, \ldots, x_n)$, each obeying a Cauchy
distribution independently, 
is the solution to the following likelihood equation on $\H$:
\begin{equation}\label{eq:Cauchy-likelihood-equation}
nh(\theta) - (\theta - \overline{\theta}) h'(\theta) = 0.
\end{equation}
\end{Prop}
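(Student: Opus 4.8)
The plan is to realize $\hat\theta$ as an interior stationary point of the log-likelihood $\ell(\theta):=\log L(\theta)=\sum_{j=1}^{n}\log f(x_j;\theta)$ and then repackage the vanishing of its gradient as the single complex identity \eqref{eq:Cauchy-likelihood-equation}. First I would check that the maximizer is interior, so that the first-order conditions apply. From \eqref{eq:Cauchy-density} we have $f(x;\theta)\to 0$ as $\sigma=\Im(\theta)\to 0^+$ (for $x\neq\mu$) and as $|\theta|\to\infty$; hence $L(\theta)\to 0$ when $\theta$ tends to the boundary of $\H$ or to infinity, whereas $L>0$ throughout $\H$. Thus the maximizer $\hat\theta$, whose existence and uniqueness are guaranteed by Copas \cite{Copas1975}, is an interior critical point and $\nabla\ell(\hat\theta)=0$.

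Next I would compute that gradient in the real coordinates $\mu,\sigma$. Since $\log f(x;\theta)=\log\sigma-\log\pi-\log\bigl((x-\mu)^2+\sigma^2\bigr)$, the two score equations are
\[
\sum_{j=1}^{n}\frac{x_j-\mu}{(x_j-\mu)^2+\sigma^2}=0,\qquad
\sum_{j=1}^{n}\frac{\sigma^2}{(x_j-\mu)^2+\sigma^2}=\frac{n}{2}.
\]
The key observation is that these are exactly the real and imaginary parts of one complex equation: because $\Re\bigl(1/(x-\theta)\bigr)=(x-\mu)/\bigl((x-\mu)^2+\sigma^2\bigr)$ and $\Im\bigl(1/(x-\theta)\bigr)=\sigma/\bigl((x-\mu)^2+\sigma^2\bigr)$, and $\theta-\overline\theta=2i\sigma$, the pair above is equivalent to
\[
\sum_{j=1}^{n}\frac{1}{x_j-\theta}=-\frac{n}{\theta-\overline\theta}.
\]
The same identity drops out still more quickly from the Wirtinger derivative $\partial_\theta=\tfrac12(\partial_\mu-i\partial_\sigma)$: differentiating $\log f$ through \eqref{eq:Cauchy-density}, only the $1/(x-\theta)$ term survives, and using $\tfrac{1}{x-\theta}-\tfrac{1}{x-\overline\theta}=\tfrac{\theta-\overline\theta}{(x-\theta)(x-\overline\theta)}$ one finds $\partial_\theta\log f=\tfrac{1}{\theta-\overline\theta}\cdot\tfrac{x-\overline\theta}{x-\theta}$; as $\ell$ is real-valued, $\nabla\ell=0$ is equivalent to $\partial_\theta\ell=0$, i.e. $\sum_j(x_j-\overline\theta)/(x_j-\theta)=0$, which reduces to the same display after writing $\tfrac{x_j-\overline\theta}{x_j-\theta}=1+\tfrac{\theta-\overline\theta}{x_j-\theta}$.

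Finally I would recognize the left-hand side as a logarithmic derivative. By \eqref{eq:def-p}, $h'(\theta)/h(\theta)=-\sum_{j}1/(x_j-\theta)$, so the last display becomes $h'(\theta)/h(\theta)=n/(\theta-\overline\theta)$. Clearing denominators is legitimate since $h(\theta)\neq0$ (the $x_j$ are real while $\theta\in\H$) and $\theta-\overline\theta=2i\sigma\neq0$, and it yields precisely $nh(\theta)-(\theta-\overline\theta)h'(\theta)=0$; every step is reversible for $\theta\in\H$, so conversely a root of \eqref{eq:Cauchy-likelihood-equation} in $\H$ satisfies the score equations. The computation is essentially routine: the only points demanding care are the justification that $\hat\theta$ lies in the interior (so that differentiation is valid) and the bookkeeping that fuses the two real score equations into one complex equation, with uniqueness of the relevant root inherited from Copas's theorem.
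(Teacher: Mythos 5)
Your proof is correct, and the core idea --- characterizing $\hat{\theta}$ as a zero of the Wirtinger derivative of a real-valued function of $\theta,\overline{\theta}$ --- is the same as the paper's. The execution differs in the details, though. The paper differentiates the product $L(\theta,\overline{\theta})$ directly, clears denominators by multiplying by $\prod_l(x_l-\theta)^2(x_l-\overline{\theta})$, and reaches \eqref{eq:Cauchy-likelihood-equation} by the polynomial identity $x_j-\overline{\theta}=(x_j-\theta)+(\theta-\overline{\theta})$. You instead differentiate $\log L$, which turns the product into a sum, pass through the classical real score equations \eqref{eq:classical-likelihood-equation} and the complex identity $\sum_j 1/(x_j-\theta)=n/(\overline{\theta}-\theta)$, and finish via the logarithmic derivative $h'/h=-\sum_j 1/(x_j-\theta)$. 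Amusingly, the intermediate identities you use are exactly what the paper later derives \emph{from} the proposition (equations \eqref{eq:Cauchy-likelihood-equation3}, \eqref{eq:classical-likelihood-equation} and Corollary \ref{cor:Cauchy-likelihood-equation4}), so your route traverses the same chain in the opposite order; the logarithmic-derivative finish is arguably cleaner than the paper's denominator-clearing. You also supply an interiority argument that the paper omits here (it appears only later, as the compactness estimate \eqref{cpt} in the proof of Theorem \ref{main}); your sketch of it is slightly glossed --- when $\sigma\to 0^+$ with $\mu\to x_j$ simultaneously, one factor of $L$ blows up like $1/\sigma$ and one needs $n\ge 3$ so that the remaining factors, each of order $\sigma$, force $L\sim\sigma^{n-2}\to 0$ --- but this is a refinement, not a gap, and the conclusion stands.
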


\begin{proof}
We regard the likelihood function $L(\theta)$ of \eqref{eq:likelihood-function}
as a function of $\theta$ and $\overline{\theta}$.
Then it is sufficient (see \cite{Messerschmitt:EECS-2006-93})
for stationary points of $L(\theta, \overline{\theta};x)$,
a real valued function on the complex plane,
to find the zeros of
\begin{multline*}
\frac{\partial L}{\partial \theta}(\theta, \overline{\theta};x)
= \left( \frac{1}{2 \pi i} \right)^n
\sum_{j=1}^n \frac{1}{(x_j - \theta)^2} \prod_{k \neq j} \left( \frac{1}{x_k - \theta} - \frac{1}{x_k - \overline{\theta}} \right) \\
= \left(\frac{1}{2 \pi i}\right)^n
\sum_{j=1}^n \frac{1}{(x_j - \theta)^2} \prod_{k \neq j} \left( \frac{\theta - \overline{\theta}}{(x_k - \theta)(x_k - \overline{\theta})}  \right)
\end{multline*}
on $\H$.
Multiplying both hand sides of above 
by $\prod_{l=1}^n (x_l - \theta)^2 (x_l - \overline{\theta})$
that takes zero at $\theta = x_l$ ($l = 1, 2, \ldots, n$) only,
the right hand side becomes

\begin{equation*}
\prod_{l=1}^n (x_l - \theta)^2 (x_l - \overline{\theta})
\sum_{j=1}^n \frac{1}{(x_j - \theta)^2} \prod_{k \neq j} \frac{\theta - \overline{\theta}}{(x_k - \theta)(x_k - \overline{\theta})}
=
(\theta - \overline{\theta})^{n-1} \sum_{j=1}^n (x_j - \overline{\theta}) \prod_{l \neq j} (x_l - \theta).
\end{equation*}
Since $x_j - \overline{\theta} = (x_j - \theta) + (\theta - \overline{\theta})$, 
this equals to
$(\theta - \overline{\theta})^{n-1} \left( nh(\theta) - (\theta - \overline{\theta}) h'(\theta) \right)$, 
which concludes that $\frac{\partial L}{\partial \theta}(\theta, \overline{\theta};x) = 0$ if and only if \eqref{eq:Cauchy-likelihood-equation} holds.
\end{proof}

\begin{remark}\label{rem:symmetric polynomials}
Note that, using elementary symmetric polynomials $s_0 = 1$,
$s_1 := \sum_{j=1}^n x_j$,
$s_2 := \sum_{i < j}^n x_i x_j$, \ldots, $s_n := x_1 x_2 \ldots x_n$,
we see that 
\begin{equation}\label{eq:p-polynomial}
h(\theta) = \sum_{j=0}^n s_{n-j} (-\theta)^j = s_n - s_{n-1}\theta + s_{n-2} \theta^2 + \dots 
+ (-1)^{n-1} s_1 \theta^{n-1} + (-1)^n s_0 \theta^n.
\end{equation}
And the Leibniz rule leads us to
\begin{equation}\label{eq:h'}
h'(\theta) = - \sum_{j=1}^n \prod_{i \neq j} (x_j - \theta)
= -\sum_{j=1}^{n} j s_{n-j}(-\theta)^{j-1}. \qedhere
\end{equation}
\end{remark}

\begin{remark}
It is clear that \eqref{eq:Cauchy-likelihood-equation} holds
at every $\theta = x_j$ ($j = 1, 2, \ldots, n$).
Though $h(\theta) \equiv h(\theta; x_1, x_2, \ldots, x_n)$ has degree at most $n$ in $\theta$, 
\eqref{eq:Cauchy-likelihood-equation} has $n + 2$ roots in $\C$
by virtue of Copas' result.
\end{remark}

It is clear that $h(\theta) = 0$ if and only if $\theta = x_j$ ($j = 1, 2, \ldots, n)$.
From Rolle's theorem
(or Gauss--Lucas' theorem), 
it means that all zeroes of $h'(\theta)$ are
real and separated by these roots of $h(\theta) = 0$.

Now we assume that all observed sample $x_1, x_2, \ldots, x_n$ are
distinct.
It occurs almost surely because we assume that they obey a Cauchy distribution
independently.
Hence, we see that  $h(\theta) = 0$ and $h'(\theta) = 0$ never occur
simultaneously.  
Therefore \eqref{eq:Cauchy-likelihood-equation} is equivalent to
\begin{equation}\label{eq:Cauchy-likelihood-equation2}
\overline{\theta} = \theta - n \frac{h(\theta)}{h'(\theta)}
\end{equation}
to which a solution in the upper half complex plane $\H$ is
the maximum likelihood estimate $\hat{\theta}$ of
$\{x_1, x_2, \ldots, x_n\}$.
We also note that \eqref{eq:Cauchy-likelihood-equation2} is equivalent to
\begin{equation}\label{eq:Cauchy-likelihood-equation3}
\frac{1}{n} \sum_{j=1}^n \frac{1}{x_j - \theta} = \frac{1}{\overline{\theta} - \theta}.
\end{equation}

Thus, we have, comparing the real part and the imaginary part respectively,
the following form of the likelihood equation (See, e.g., \cite{Copas1975, Ferguson1978}) for the location parameter and the scale parameter.

\begin{Cor}
The real part $\hat{\mu}$ and the imaginary part $\hat{\sigma}$
of the maximum likelihood estimate $\hat{\theta}$ of $\{x_1, \dots, x_n\}$ solve
\begin{equation}\label{eq:classical-likelihood-equation}
\begin{cases}
\sum\limits_{j=1}^n \frac{x_j - \mu}{(x_j - \mu)^2 + \sigma^2} = 0, \\
\sum\limits_{j=1}^n \frac{\sigma^2}{(x_j-\mu)^2 + \sigma^2} = \frac{n}{2}.
\end{cases}
\end{equation}
\end{Cor}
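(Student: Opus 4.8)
The plan is to derive both real equations directly from the complex likelihood equation \eqref{eq:Cauchy-likelihood-equation3}, which has already been shown equivalent to \eqref{eq:Cauchy-likelihood-equation} on $\H$. Writing $\theta = \mu + i\sigma$ with $\sigma > 0$, I would substitute into \eqref{eq:Cauchy-likelihood-equation3} and separate both sides into their real and imaginary parts; equating these two pairs yields exactly the system \eqref{eq:classical-likelihood-equation}.

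First I would rewrite each summand on the left-hand side via $\frac{1}{x_j - \theta} = \frac{\overline{x_j - \theta}}{|x_j - \theta|^2}$. Since $x_j - \theta = (x_j - \mu) - i\sigma$, the denominator equals $(x_j - \mu)^2 + \sigma^2$ and the numerator equals $(x_j - \mu) + i\sigma$. Thus the left-hand side splits as $\frac{1}{n}\sum_{j=1}^n \frac{x_j - \mu}{(x_j - \mu)^2 + \sigma^2} + \frac{i}{n}\sum_{j=1}^n \frac{\sigma}{(x_j - \mu)^2 + \sigma^2}$, cleanly separating the real and imaginary contributions.

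Next I would treat the right-hand side. Because $\overline{\theta} - \theta = -2 i\sigma$, we have $\frac{1}{\overline{\theta} - \theta} = \frac{i}{2\sigma}$, whose real part is $0$ and whose imaginary part is $\frac{1}{2\sigma}$. Equating real parts gives $\frac{1}{n}\sum_{j=1}^n \frac{x_j - \mu}{(x_j - \mu)^2 + \sigma^2} = 0$, and since $n \neq 0$ this is the first equation of \eqref{eq:classical-likelihood-equation}. Equating imaginary parts gives $\frac{1}{n}\sum_{j=1}^n \frac{\sigma}{(x_j - \mu)^2 + \sigma^2} = \frac{1}{2\sigma}$; multiplying through by $n\sigma$ (legitimate since $\hat{\sigma} > 0$ for $\hat\theta \in \H$) produces $\sum_{j=1}^n \frac{\sigma^2}{(x_j - \mu)^2 + \sigma^2} = \frac{n}{2}$, the second equation.

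The computation is entirely routine, so there is no serious obstacle; the only point requiring care is the factor $i$ and the sign when evaluating $\frac{1}{\overline{\theta} - \theta} = \frac{1}{-2 i\sigma}$, which must be rationalized to $\frac{i}{2\sigma}$ so that its imaginary part is correctly identified as $+\frac{1}{2\sigma}$ and the normalization $\frac{n}{2}$ comes out with the right sign.
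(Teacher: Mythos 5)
Your proposal is correct and follows exactly the route the paper intends: the paper derives the corollary from \eqref{eq:Cauchy-likelihood-equation3} by ``comparing the real part and the imaginary part respectively,'' which is precisely the computation you carry out (including the rationalization $\frac{1}{\overline{\theta}-\theta}=\frac{i}{2\sigma}$ and the final multiplication by $n\sigma$). You have merely supplied the routine details the paper omits.
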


Comparing \eqref{eq:classical-likelihood-equation} with
\eqref{eq:Cauchy-likelihood-equation3} or
\eqref{eq:Cauchy-likelihood-equation4} below
clarifies the advantage to formulate the maximum likelihood estimator
by a single complex parameter.
Therefore, in the sequel of the paper, 
we will concentrate on finding $\hat{\theta}$ satisfying \eqref{eq:Cauchy-likelihood-equation2}.

We begin with analyzing the right hand side of
\eqref{eq:Cauchy-likelihood-equation2}.
Let 
$$q(\theta) := \theta - n \frac{h(\theta)}{h'(\theta)}.$$
Let us introduce the following
M\"obius transformation (see \cite{Rudin1987}):
\begin{equation}
\Mob_\theta(\zeta) := \frac{\zeta - \theta}{\zeta - \overline{\theta}}.
\end{equation}

Note that $\Mob_\theta$ is a bijective holomorphic map
from the upper half plane $\H$ to the unit open disc centered at origin $\D$.
Then, $q(\theta)$ can be expressed as follows:

\begin{Lem}
For $\theta \in \H$ and $x_1, x_2, \ldots, x_n \in \R$, we see that 
\begin{equation}\label{eq:Mobius-representation}
q(\theta) = \overline{ \Mob_\theta^{-1}\left( \frac{1}{n} \sum_{j=1}^n \Mob_\theta(x_j) \right)}.
\end{equation}
\end{Lem}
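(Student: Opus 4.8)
The plan is to verify \eqref{eq:Mobius-representation} by a direct computation that exploits the reality of the sample points $x_1,\ldots,x_n$, so that complex conjugation acts transparently on every resolvent involved. The strategy splits into three moves: first rewrite the left-hand side $q(\theta)$ in terms of the averaged resolvent $S := \frac{1}{n}\sum_{j=1}^n \frac{1}{x_j - \theta}$; then rewrite the Möbius average on the right in terms of $\overline{S}$; and finally match the two sides by inverting $\Mob_\theta$ explicitly.

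First I would simplify $q(\theta)$. Recognizing $h'(\theta)/h(\theta)$ as the logarithmic derivative of $h(\theta)=\prod_{j}(x_j-\theta)$ gives
\[
\frac{h'(\theta)}{h(\theta)} = -\sum_{j=1}^n \frac{1}{x_j-\theta} = -nS,
\]
so that $q(\theta) = \theta - n\,h(\theta)/h'(\theta) = \theta + 1/S$. This is exactly the shape I want to recover on the other side. Note that $S\neq 0$ on $\H$, since $S=0$ would force $h'(\theta)=0$, and the remark preceding the lemma (Gauss--Lucas) confirms the zeros of $h'$ are all real; hence $1/S$ is legitimate.

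Next I would unwind the right-hand side. Because each $x_j$ is real, I would write $\Mob_\theta(x_j) = \frac{x_j-\theta}{x_j-\overline{\theta}} = 1 - \frac{\theta-\overline{\theta}}{x_j-\overline{\theta}}$ and use $\overline{1/(x_j-\theta)} = 1/(x_j-\overline{\theta})$ to obtain
\[
A := \frac{1}{n}\sum_{j=1}^n \Mob_\theta(x_j) = 1 - (\theta-\overline{\theta})\,\overline{S}.
\]
Solving $w = \frac{\zeta-\theta}{\zeta-\overline{\theta}}$ for $\zeta$ gives the inverse $\Mob_\theta^{-1}(w) = \frac{w\overline{\theta}-\theta}{w-1}$. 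Substituting $w=A$, both numerator and denominator acquire a common factor $(\theta-\overline{\theta})$; after cancelling it the expression collapses to $\Mob_\theta^{-1}(A) = \overline{\theta} + 1/\overline{S}$. Taking the complex conjugate and using $\overline{\overline{\theta}}=\theta$ together with $\overline{1/\overline{S}}=1/S$ returns $\theta + 1/S = q(\theta)$, which is the asserted identity.

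There is no genuine analytic obstacle here; the content is a purely algebraic identity, and the only thing demanding care is the bookkeeping of conjugates. The pivotal observation, valid precisely because the $x_j$ are real, is that the ``conjugate'' averaged resolvent $\frac{1}{n}\sum_j \frac{1}{x_j-\overline{\theta}}$ equals $\overline{S}$. This is what makes the single outer conjugation in \eqref{eq:Mobius-representation} exactly undo the two internal occurrences of conjugation (one from the $\overline{\theta}$ in the denominators of $\Mob_\theta$, one introduced by the inverse map). I would also record that $A-1 = -(\theta-\overline{\theta})\overline{S}\neq 0$ on $\H$, equivalently $|A|<1$ by convexity of $\D$, so that $\Mob_\theta^{-1}(A)$ is well defined and lands back in $\H$, keeping all the Möbius bookkeeping consistent.
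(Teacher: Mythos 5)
Your proof is correct and follows essentially the same route as the paper's: a direct algebraic computation of $\Mob_\theta^{-1}$ applied to the average, using the reality of the $x_j$ to turn the internal conjugations into a single outer one. The only cosmetic difference is that you normalize $q(\theta)$ as $\theta + 1/S$ with $S$ the averaged resolvent, whereas the paper works with the equivalent ratio form $q(\theta) = \bigl(\sum_j x_j/(x_j-\theta)\bigr)/\bigl(\sum_j 1/(x_j-\theta)\bigr)$ and computes the numerator and denominator of the Möbius inverse separately.
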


\begin{proof}
First, we note that $q(\theta) = \frac{\sum\limits_{j=1}^n \frac{x_j}{x_j - \theta}}{\sum\limits_{j=1}^n \frac{1}{x_j - \theta}}$ and
$\Mob_\theta^{-1}(\zeta) = \frac{\theta -\overline{\theta}\zeta}{1 - \zeta}$.
Let us compute the numerator of the right hand side of \eqref{eq:Mobius-representation} without taking complex conjugate. 
\[
\theta -\overline{\theta}\frac{1}{n}\sum_{j=1}^n \Mob_\theta(x_j)
= \frac{1}{n} \theta \sum_{j=1}^n \frac{x_j - \overline{\theta}}{x_j - \overline{\theta}}-\overline{\theta}\frac{1}{n} \sum_{j=1}^n \frac{x_j - \theta}{x_j - \overline{\theta}} = \frac{\theta - \overline{\theta}}{n} \sum_{j=1}^n \frac{x_j}{x_j - \overline{\theta}}.
\]
The denominator of \eqref{eq:Mobius-representation} can be computed as
\[
1 - \frac{1}{n} \sum_{j=1}^n \frac{x_j - \theta}{x_j - \overline{\theta}} = \frac{1}{n} \sum_{j=1}^n \frac{x_j - \overline{\theta}}{x_j - \overline{\theta}}
- \frac{1}{n} \sum_{j=1}^n \frac{x_j - \theta}{x_j - \overline{\theta}} = \frac{\theta - \overline{\theta}}{n} \sum_{j=1}^n \frac{1}{x_j - \overline{\theta}}.
\]
Since $\theta \in \H$ and
$x_1, x_2, \ldots, x_n \in \R$, we have the conclusion.
\end{proof}

\begin{remark}
In general, a quantity like
$\Mob^{-1}\left( \frac{1}{n} \sum_{j=1}^n \Mob(x_j) \right)$ is called
a quasi-arithmetic mean of $\{x_1, x_2, \ldots, x_n\}$
with generator $\Mob$.
In this point of view, 
the maximum likelihood estimate $\hat{\theta}$
of the
observed sample is a
complex conjugate of a quasi-arithmetic mean
generated by a M\"obius transformation,
while the generator itself depends on the parameter.
Complex-valued quasi-arithmetic means are essential tools in statistical analysis of the
Cauchy distributions.
Indeed, they often give unbiased estimators for the parameter.
See \cite{Akaoka2021-1} for limit theorems for complex-valued quasi-arithmetic mean of random variables and its application to unbiased estimations of the Cauchy distribution.
We emphasize that the effectiveness of the 
M\"obius transformations to study Cauchy distributions were
first examined by McCullagh \cite{McCullagh1996}.
\end{remark}

\begin{Cor}\label{cor:Cauchy-likelihood-equation4}
The maximum likelihood estimate $\hat{\theta}$ of $\{x_1, \dots, x_n\}$ solves
\begin{equation}\label{eq:Cauchy-likelihood-equation4}
\sum_{j=1}^n \Mob_\theta(x_j) = 
\sum_{j=1}^n \frac{x_j - \theta}{x_j - \overline{\theta}} = 0.
\end{equation}
\end{Cor}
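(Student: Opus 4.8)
The plan is to read off \eqref{eq:Cauchy-likelihood-equation4} directly from the M\"obius representation \eqref{eq:Mobius-representation} of $q(\theta)$ combined with the fixed-point characterization \eqref{eq:Cauchy-likelihood-equation2} of the maximum likelihood estimate. Recall that $\hat{\theta}\in\H$ satisfies $\overline{\theta}=q(\theta)$; since $\hat{\theta}$ lies in the open upper half plane, $\Mob_\theta$ is a genuine biholomorphism $\H\to\D$ at $\theta=\hat{\theta}$, so every application and inversion of $\Mob_\theta$ below is legitimate and no denominator $x_j-\overline{\theta}$ vanishes.

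First I would substitute \eqref{eq:Mobius-representation} into the likelihood equation $\overline{\theta}=q(\theta)$, obtaining
\[
\overline{\theta} = \overline{\Mob_\theta^{-1}\!\left(\tfrac{1}{n}\sum_{j=1}^n \Mob_\theta(x_j)\right)}.
\]
Taking complex conjugates of both sides removes the outer bar and yields $\theta=\Mob_\theta^{-1}\bigl(\tfrac{1}{n}\sum_{j=1}^n\Mob_\theta(x_j)\bigr)$. Next I would apply the invertible map $\Mob_\theta$ to both sides and use that $\Mob_\theta(\theta)=(\theta-\theta)/(\theta-\overline{\theta})=0$. This gives $\tfrac{1}{n}\sum_{j=1}^n\Mob_\theta(x_j)=0$, and clearing the factor $1/n$ produces exactly \eqref{eq:Cauchy-likelihood-equation4}.

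Alternatively, and perhaps more transparently, I would derive the identity straight from \eqref{eq:Cauchy-likelihood-equation3} without invoking the Lemma. Writing $\frac{x_j-\theta}{x_j-\overline{\theta}}=1+\frac{\overline{\theta}-\theta}{x_j-\overline{\theta}}$ and summing over $j$ shows that \eqref{eq:Cauchy-likelihood-equation4} is equivalent to $\sum_{j=1}^n\frac{1}{x_j-\overline{\theta}}=\frac{n}{\theta-\overline{\theta}}$. The latter is nothing but the complex conjugate of \eqref{eq:Cauchy-likelihood-equation3}: since each $x_j$ is real, conjugating $\frac{1}{n}\sum_{j=1}^n\frac{1}{x_j-\theta}=\frac{1}{\overline{\theta}-\theta}$ replaces $\theta$ by $\overline{\theta}$ on the left and flips the sign of the denominator on the right, giving precisely this relation.

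There is no serious obstacle here: the statement is a formal consequence of results already proved, so the only points requiring a word of care are bookkeeping ones---checking that $\Mob_\theta$ is invertible at $\theta=\hat{\theta}$ (ensured by $\hat{\theta}\in\H$) in the first approach, and correctly exploiting the reality of the $x_j$ when conjugating in the second.
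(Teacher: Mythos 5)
Your first argument is exactly the paper's proof: conjugate the M\"obius representation \eqref{eq:Mobius-representation} to get $\overline{q(\theta)}=\Mob_\theta^{-1}\bigl(\tfrac{1}{n}\sum_j\Mob_\theta(x_j)\bigr)$, use $\overline{q(\hat{\theta})}=\hat{\theta}$, and apply $\Mob_{\hat{\theta}}$ together with $\Mob_{\hat{\theta}}(\hat{\theta})=0$. Your alternative derivation directly from \eqref{eq:Cauchy-likelihood-equation3} (writing $\frac{x_j-\theta}{x_j-\overline{\theta}}=1+\frac{\overline{\theta}-\theta}{x_j-\overline{\theta}}$ and conjugating, using that the $x_j$ are real) is also correct, but the main route coincides with the paper's.
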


This is also another form of the likelihood equation in the complex form. 
If we let $\tilde h (z,w)  := \sum_{j=1}^n \frac{x_j - z}{x_j - w}$, then, \eqref{eq:Cauchy-likelihood-equation4} is equivalent to the equation $\tilde h (\theta, \overline{\theta}) =0$. 
This plays an important role when we consider the Bahadur efficiency of the maximum likelihood estimator (\cite{Akaoka2021-2}). 

\begin{proof}
From the above lemma, we see that 
$\Mob_\theta( \overline{q(\theta)} ) = \frac{1}{n} \sum_{j=1}^n \Mob_\theta(x_j)$.
The assertion follows from $\overline{q(\theta)} = \theta$ and $\Mob_\theta(\theta) = 0$.
\end{proof}

\begin{Prop}
$q(\cdot)$ maps the upper and lower half plane to the lower and upper half plane,
respectively.
\end{Prop}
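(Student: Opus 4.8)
The plan is to read off the sign of the imaginary part of $q(\theta)$ directly from the M\"obius representation \eqref{eq:Mobius-representation}, which is already at our disposal. Fix $\theta \in \H$. Since $\Mob_\theta$ is a biholomorphism from $\H$ onto $\D$, it carries the boundary $\R \cup \{\infty\}$ of $\H$ onto the boundary $\partial \D$ of the disc; in particular each $\Mob_\theta(x_j)$ lies on the unit circle, so $|\Mob_\theta(x_j)| = 1$ for every $j$.

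Next I would average. The point $w := \frac{1}{n}\sum_{j=1}^n \Mob_\theta(x_j)$ is a convex combination of $n$ points lying on the unit circle, hence $|w| \leq 1$. Because the samples are distinct and $\Mob_\theta$ is injective, the points $\Mob_\theta(x_j)$ are pairwise distinct, and strict convexity of the closed unit disc forces $|w| < 1$, i.e.\ $w \in \D$. Consequently $\Mob_\theta^{-1}(w)$ is a well-defined point of $\H$ (note that $w \neq 1$, so the formula $\Mob_\theta^{-1}(\zeta) = (\theta - \overline{\theta}\zeta)/(1-\zeta)$ recorded in the previous lemma causes no trouble). By \eqref{eq:Mobius-representation} we have $q(\theta) = \overline{\Mob_\theta^{-1}(w)}$, and since complex conjugation sends $\H$ to the lower half plane, I conclude $\Im(q(\theta)) < 0$.

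Finally, for the lower half plane I would invoke the reflection symmetry of $q$. Because $h$ has real coefficients by \eqref{eq:p-polynomial}, we have $h(\overline{\theta}) = \overline{h(\theta)}$ and $h'(\overline{\theta}) = \overline{h'(\theta)}$, whence $q(\overline{\theta}) = \overline{q(\theta)}$. Thus if $\theta$ lies in the lower half plane, then $\overline{\theta} \in \H$, the first part yields $\Im(q(\overline{\theta})) < 0$, and $q(\theta) = \overline{q(\overline{\theta})}$ therefore has positive imaginary part. This settles both assertions at once.

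I expect the only genuinely delicate point to be the strict inequality $|w| < 1$: one must rule out the degenerate boundary case in which all the values $\Mob_\theta(x_j)$ coincide, which under the standing assumption that the $x_j$ are distinct cannot occur. Everything else follows directly from the mapping properties of $\Mob_\theta$ stated just before the proposition, so the argument is essentially a packaging of the quasi-arithmetic-mean interpretation already developed in the excerpt.
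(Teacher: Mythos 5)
Your proof is correct and follows essentially the same route as the paper: both rest on the M\"obius representation \eqref{eq:Mobius-representation}, observing that the average of the points $\Mob_\theta(x_j)$ on the unit circle lies in the unit disc, so that $\Mob_\theta^{-1}$ returns a point of $\H$ whose conjugate lies in the lower half plane. Your added care about the strict inequality $|w|<1$ (via distinctness of the $x_j$) and the explicit reflection argument $q(\overline{\theta})=\overline{q(\theta)}$ for the lower half plane are welcome refinements of details the paper leaves implicit.
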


\begin{proof}
Since each
$\Mob_\theta(x_j)$, $j = 1, 2, \ldots, n$,
is on the unit circle centered at the origin for every
$x_j \in \R$ and $\theta \in \H$,
$\frac{1}{n} \sum_{j=1}^{n} \Mob_\theta (x_j)$ is contained in the
unit disc centered at the origin.
The assertion now follows from the expression \eqref{eq:Mobius-representation}.
\end{proof}

Therefore we can restrict our consideration to the upper half complex
plane by setting $Q(\theta) := q(q(\theta))$.
Then $Q : \H \to \H$ is holomorphic,
and, by \eqref{eq:Cauchy-likelihood-equation2}, the maximum likelihood estimator $\hat{\theta}$ satisfies
$\hat{\theta} = Q(\hat{\theta})$. 
Considerations of the maximal likelihood estimates in the following sections depend on this equality. 

\section{Iterative scheme for the maximum likelihood estimates}\label{sec:fixed}

As is just stated in the preceding section,
the maximum likelihood estimate $\hat{\theta}$ for the Cauchy distribution
is a solution to $\theta = Q(\theta)$.

We first study an approximating scheme to achieve the solution.
We show that this scheme is effectively applicable to the numeric
computation for the maximum likelihood estimates,
while some examples will be discussed in Appendix \ref{sec:numeric}.
Then we will show that our scheme also works
in the case of the circular Cauchy distributions.

\subsection{Fixed point of $Q$}

The maximum likelihood estimate
$\hat{\theta}$ of an observed sample $\{x_1, x_2, \ldots, x_n\}$ from
a Cauchy distribution is a fixed point of $Q: \H \to \H$,
where $Q(\theta) = q(q(\theta))$, and $q(\theta)$ is 
expressed as \eqref{eq:Mobius-representation}.
Our strategy to achieve $\hat{\theta}$ is 
to construct an orbit $\z_0, \z_1, \ldots$
defined by $z_m = Q(\z_{m-1})$.
If it has a limit point $\z_\infty$, it satisfies
$\z_\infty = Q(\z_\infty)$.

\begin{remark}
Recall that 
$Q$ depends on the observed sample
$\{x_1, x_2, \ldots, x_n\}$, since $q(\cdot)$ does.
Because the observed samples are realization of independent
random variables $X_1, X_2, \ldots, X_n$ obeying the Cauchy distribution,
$Q$ can be seen as a random holomorphic function.
In this sense, our iterative scheme is a random dynamical system.
Therefore the maximum likelihood estimator $\hat{\theta}(X_1, X_2, \ldots, X_n)$ is a random fixed point of the random dynamics $Q$. 
We note that the law of  $\hat{\theta}(X_1, X_2, \ldots, X_n)$ 
was studied by McCullagh \cite{McCullagh1993}.
\end{remark}

To assert the unique existence of the fixed point,
Banach's fixed point theorem is often used.
But in our situation, $Q$ is not contractive nor $\H$ is not complete.
Moreover, $Q$ clearly has some
fixed points on the boundary of $\H$ (real axis).
But we will show that
similar statements hold and every orbit defined by $z_n := Q(z_{n-1})$
starting at every point in $\H$ converges to a unique attracting fixed point.
It also recovers Copas' uniqueness result for the maximum likelihood
estimate \cite{Copas1975},
but we believe the argument presented here, which is complex analytic,
is much simpler than his. 
Furthermore, this iterative scheme gives
an efficient numerical way to compute the maximum likelihood estimate.

Recall that we assume that the sample size $n \geq 3$
and the observed sample $x_1, x_2, \ldots, x_n$ are distinct.

\begin{Thm}\label{main}
(i) The equation $\z = Q(\z)$ has a unique solution in $\H$
	which is equal to the maximum likelihood estimate $\hat{\theta}(x_1, \dots, x_n)$.\\
(ii) The unique solution $\hat{\theta}$ of the equation $\z = Q(\z)$ in $\H$
	is a fixed point of $Q$.\\
(iii) For every $\z \in \H$, $\lim_{m \to \infty} Q^m (\z) = \hat{\theta},$ 
where $Q^m(\z) := \overbrace{Q(Q(\dots Q}^m(z)))$ denotes the $m$-times iteration of $Q$. 
This convergence is exponentially fast. 
\end{Thm}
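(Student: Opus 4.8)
The plan is to realize $\hat{\theta}$ as an \emph{attracting} fixed point of the holomorphic self-map $Q\colon\H\to\H$ and to exploit the contractivity of holomorphic maps in the Poincar\'e (hyperbolic) metric $d_{\H}$ via the Schwarz--Pick lemma. First I would record that $\hat{\theta}$ is genuinely a fixed point of $Q$ lying in $\H$. Since $h$ has real coefficients, $q(\overline{\theta})=\overline{q(\theta)}$; combined with the likelihood equation \eqref{eq:Cauchy-likelihood-equation2}, which at $\hat{\theta}$ reads $q(\hat{\theta})=\overline{\hat{\theta}}$, this gives $Q(\hat{\theta})=q(q(\hat{\theta}))=q(\overline{\hat{\theta}})=\overline{q(\hat{\theta})}=\hat{\theta}$. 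This settles the existence half of (i) and statement (ii) simultaneously.

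The heart of the argument is strict contractivity, and I expect this to be the main obstacle. I would first verify that $Q$ is \emph{not} a holomorphic automorphism of $\H$. Writing $q=(\theta h'-n h)/h'$, the equation $q(\theta)=w$ is equivalent to $(\theta-w)h'(\theta)-n h(\theta)=0$; a leading-coefficient computation shows the top-degree terms cancel and the coefficient of $\theta^{\,n-1}$ equals $(-1)^n(s_1-nw)$, which is nonzero for generic $w$. Hence $q$ is a rational map of degree exactly $n-1\ge 2$ for $n\ge 3$, so $Q=q\circ q$ has degree $(n-1)^2\ge 4$ and cannot coincide with a degree-one M\"obius map, i.e.\ $Q$ is not an automorphism of $\H$. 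The equality case of the Schwarz--Pick lemma then forces the strict infinitesimal contraction $k(z):=|Q'(z)|\,\Im z/\Im Q(z)<1$ at every $z\in\H$; in particular $|Q'(\hat{\theta})|<1$, so $\hat{\theta}$ is attracting.

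To upgrade this to global, exponentially fast convergence I would run a compactness argument in $d_{\H}$. Fix $z\in\H$ and put $R:=d_{\H}(z,\hat{\theta})$. Because $Q$ is $d_{\H}$-nonexpansive (Schwarz--Pick, $k\le 1$) and fixes $\hat{\theta}$, the whole orbit stays in the closed hyperbolic ball $K:=\{w:d_{\H}(w,\hat{\theta})\le R\}$, which is a compact subset of $\H$. Since $k$ is continuous and $k<1$ on $K$, we get $c:=\sup_{K}k<1$. Integrating $k$ along the radial geodesic from $\hat{\theta}$ to any $w\in K$ (which remains in $K$) yields $d_{\H}(Q(w),\hat{\theta})\le c\,d_{\H}(w,\hat{\theta})$, hence $d_{\H}(Q^m(z),\hat{\theta})\le c^{m}R\to 0$. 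As $d_{\H}$ and the Euclidean metric are comparable near $\hat{\theta}$, this gives exponentially fast Euclidean convergence, proving (iii). Uniqueness in (i) is then immediate: any $w\in\H$ with $w=Q(w)$ has constant orbit $Q^m(w)=w$, which must converge to $\hat{\theta}$, forcing $w=\hat{\theta}$.

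The difficulty is thus concentrated entirely in the non-automorphism step: once the degree/leading-coefficient bookkeeping delivers the strict Schwarz--Pick inequality, the attracting fixed point, the uniform contraction on $K$, and uniqueness all follow by soft arguments. The only care needed in the final step is the standard fact that closed hyperbolic balls are compact in $\H$ and that geodesics issuing from their centers stay inside them.
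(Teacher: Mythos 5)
Your proposal is correct and runs on the same engine as the paper's proof: Schwarz(--Pick) at the fixed point, strictness because $Q$ is not an automorphism of $\H$, and a uniform contraction constant on a compact set containing the orbit; the disk model with $F(w)/w$ in the paper and your hyperbolic-metric ratio $k$ on a closed hyperbolic ball are the same argument in two equivalent languages. Two sub-steps are genuinely different and worth recording. First, your exclusion of the automorphism case via degree bookkeeping --- the top-degree cancellation in $\theta h'-nh$, the coefficient $(-1)^n(s_1-nw)$ of $\theta^{n-1}$, hence $\deg q=n-1\ge 2$ and $\deg Q=(n-1)^2>1$, so $Q$ cannot agree on $\H$ with a M\"obius map (identity theorem) --- is a clean, self-contained substitute for the paper's step, which writes out the putative M\"obius form of $Q$, asserts ``this cannot occur for $n\ge 3$,'' and defers the justification to Remark \ref{Rmk-Q} (pole counting, or a Cauchy--Schwarz computation of $|q'(\z_0)|$); your version is arguably tidier, and the coprimality of $h$ and $h'$ (distinct sample points) is exactly what guarantees no cancellation with the denominator. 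Second, the one ingredient you take for granted that the paper proves from scratch is the \emph{existence} of a maximizer of $L$: the paper opens with the tightness estimate \eqref{cpt}, showing $L$ is small outside a compact subset of $\H$, so that a maximizer --- hence a fixed point of $Q$ --- exists without invoking Copas. Your argument produces the fixed point only by assuming the MLE exists (citable from \cite{Copas1975}, and consistent with how $\hat{\theta}$ is introduced in Section \ref{sec:framework}), so as written it recovers only the uniqueness half of Copas' theorem rather than the full self-contained statement the paper aims for. Everything else --- the strict inequality $|Q'(\hat{\theta})|<1$, the compactness of closed hyperbolic balls, geodesic convexity, the $c^m$ decay, and uniqueness via convergence of constant orbits --- is sound.
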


As a corollary, it is easy to prove 
that the maximum likelihood estimate is stable with respect to
small perturbation of the observed sample. 

\begin{Cor}\label{stability}
Denote the maximum likelihood estimate of 
$x_1^{(m)}, \dots, x_n^{(m)}$
from the Cauchy distribution by $\hat{\theta}^{(m)}$,
for each $m \in \N \cup \{\infty\}$. 
If the $n$-dimensional vectors
$\left(x_1^{(m)}, \dots, x_n^{(m)}\right)$
converges to $\left(x_1^{(\infty)}, \dots, x_n^{(\infty)}\right)$
as $m \to \infty$, 
then, $\hat{\theta}^{(m)} \to \hat{\theta}^{(\infty)}$ as $m \to \infty$.
\end{Cor}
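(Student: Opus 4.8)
The plan is to combine the uniqueness assertion of Theorem \ref{main}(i) with a compactness argument for the family of estimates $\{\hat{\theta}^{(m)}\}_{m \in \N}$, so that the corollary indeed follows with little extra work. Throughout I keep the standing assumption that the observed samples are pairwise distinct, which for $m = \infty$ means $x_1^{(\infty)}, \dots, x_n^{(\infty)}$ are distinct so that $\hat{\theta}^{(\infty)}$ is well defined. Since $(x_1^{(m)}, \dots, x_n^{(m)}) \to (x_1^{(\infty)}, \dots, x_n^{(\infty)})$, there then exist $\delta > 0$ and $M > 0$ with $|x_i^{(m)} - x_j^{(m)}| \ge \delta$ for $i \neq j$ and $\max_j |x_j^{(m)}| \le M$ for all large $m$. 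It suffices to show that every subsequence of $(\hat{\theta}^{(m)})_m$ admits a further subsequence converging to $\hat{\theta}^{(\infty)}$, since convergence of the whole sequence follows from this.

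The core of the argument, and the step I expect to be the main obstacle, is an a priori bound confining $\hat{\theta}^{(m)} = \hat{\mu}^{(m)} + i \hat{\sigma}^{(m)}$ to a fixed compact subset of $\H$. Writing the likelihood equation in the classical form \eqref{eq:classical-likelihood-equation}, the first equation forces $\hat{\mu}^{(m)} \in [\min_j x_j^{(m)}, \max_j x_j^{(m)}] \subseteq [-M, M]$, since otherwise every summand $(x_j - \mu)/((x_j-\mu)^2 + \sigma^2)$ shares one sign and the sum cannot vanish. With the location thus bounded, the second equation of \eqref{eq:classical-likelihood-equation} bounds the scale from above: using $|x_j^{(m)} - \hat{\mu}^{(m)}| \le 2M$ one gets $n \sigma^2/(\sigma^2 + 4M^2) \le n/2$, hence $\hat{\sigma}^{(m)} \le 2M$. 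The delicate direction is bounding $\hat{\sigma}^{(m)}$ away from $0$: here I would use that, by the separation $\delta$, the point $\hat{\mu}^{(m)}$ lies within distance $\delta/2$ of at most one sample point, so as $\hat{\sigma}^{(m)} \to 0$ at most one summand in the second equation stays bounded below while the remaining $n-1$ terms are $O(\hat{\sigma}^{(m)2}/\delta^2)$, making the left-hand side at most $1 + o(1)$; since $n \ge 3$ gives $n/2 \ge 3/2 > 1$, this is impossible, and one extracts a lower bound $\hat{\sigma}^{(m)} \ge c > 0$ uniform in large $m$. This is precisely where both the hypothesis $n \ge 3$ and the distinctness of the limiting sample are used.

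Granting these bounds, the family $\{\hat{\theta}^{(m)}\}$ eventually lies in a compact set $K \subset \H$. Given any subsequence, compactness yields a further subsequence $\hat{\theta}^{(m_k)} \to \theta^\ast \in K \subset \H$. The left-hand side of \eqref{eq:Cauchy-likelihood-equation4}, namely $\tilde{h}(\theta, \overline{\theta}) = \sum_{j=1}^n (x_j - \theta)/(x_j - \overline{\theta})$, is jointly continuous in $(\theta, x_1, \dots, x_n)$ on $K \times \R^n$ because $\Im(\theta) > 0$ keeps the denominators away from zero. Passing to the limit along $m_k$ in the identity $\tilde{h}(\hat{\theta}^{(m_k)}, \overline{\hat{\theta}^{(m_k)}}) = 0$ with sample $x^{(m_k)}$ shows that $\theta^\ast$ solves \eqref{eq:Cauchy-likelihood-equation4} for the limiting sample $x^{(\infty)}$. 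By the uniqueness in Theorem \ref{main}(i), $\theta^\ast = \hat{\theta}^{(\infty)}$. As every subsequence has a further subsequence with this same limit, $\hat{\theta}^{(m)} \to \hat{\theta}^{(\infty)}$, as required. I note that one could alternatively derive stability from the uniform exponential convergence in Theorem \ref{main}(iii) via a three-epsilon argument, but that route requires uniformity of the contraction rate as the sample varies, which is less transparent than the compactness approach sketched here.
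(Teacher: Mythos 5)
Your proof is correct, but it follows a genuinely different route from the paper's. The paper's own argument is local and complex-analytic: it sets $F_m(\z) := Q_m(\z) - \z$, notes that $F_\infty$ has exactly one zero in a small ball $B(\hat{\theta}^{(\infty)},\epsilon)$, uses the continuous dependence of the rational functions $F_m$ on the sample to get uniform convergence $F_m \to F_\infty$ on the boundary circle, and then applies Rouch\'e's theorem to conclude that $F_m$ has exactly one zero there, which by Theorem \ref{main} must be $\hat{\theta}^{(m)}$. You instead prove an a priori confinement of the estimates to a fixed compact subset of $\H$ directly from the real likelihood equations \eqref{eq:classical-likelihood-equation} --- bounding $\hat{\mu}^{(m)}$ by the sample range, $\hat{\sigma}^{(m)}$ from above by a sign/monotonicity argument, and $\hat{\sigma}^{(m)}$ away from $0$ using the $\delta$-separation of the sample together with $n/2 > 1$ --- and then close the argument by subsequence extraction, joint continuity of $\tilde h$, and the uniqueness in Theorem \ref{main}(i). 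All the individual steps check out: the lower bound on $\hat{\sigma}^{(m)}$ is the genuinely delicate one and you handle it correctly (at most one summand of the second equation can stay bounded below as $\sigma \to 0$, so the sum is at most $1+o(1) < n/2$), and the passage from a solution of \eqref{eq:Cauchy-likelihood-equation4} to a fixed point of $Q$ is justified by the equivalences established in Section \ref{sec:framework}. The trade-off: the paper's Rouch\'e argument is shorter and immediately quantitative (it places $\hat{\theta}^{(m)}$ within $\epsilon$ of $\hat{\theta}^{(\infty)}$ without any global control), whereas your approach is more elementary, avoids Rouch\'e entirely, and yields as a by-product an explicit compact set, depending only on the separation $\delta$ and the bound $M$ on the sample, that eventually contains all the estimates --- which is of some independent interest. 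Both proofs ultimately rest on the uniqueness statement of Theorem \ref{main}(i), and both require the limiting sample to consist of distinct points, which you correctly flag as a standing assumption.
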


This corollary is obvious when $n = 3$ or $4$ from Ferguson's 
explicit formulae for the maximum likelihood estimates \cite{Ferguson1978}.
However, we will see in Section \ref{sec:poly} below,
we could not expect to exist algebraic closed-form formulae for $n \ge 5$.

\begin{proof}[Proof of Theorem \ref{main}]
Since $n \ge 3$, 
we see that for every $\epsilon > 0$ there exists a compact subset $K \subset \H$ such that 
\begin{equation}\label{cpt}
\sup_{\theta \in \H \setminus K} L(\theta; x_1, \dots, x_n) <  \epsilon.
\end{equation}

We first show \eqref{cpt}. 
Set $\theta = \mu + \sigma i$. 
Since 
\[ \frac{\sigma}{\pi} \frac{1}{(x_j - \mu)^2 + \sigma^2} \le \frac{1}{\pi} \min\left\{ \frac{1}{\sigma}, \frac{\sigma}{(x_j-\mu)^2}, \frac{1}{|x_j - \mu|}  \right\},\]
there exists $M_1$ such that if $|\mu| > M_1$ or $\sigma > M_1$, 
then, for each $j$, 
$\frac{\sigma}{\pi} \frac{1}{(x_j - \mu)^2 + \sigma^2} \le \epsilon^{1/n}$.

Let $\delta_0 > 0$ such that $[x_i - \delta_0, x_i + \delta_0] \cap  [x_j - \delta_0, x_j + \delta_0] = \emptyset$ for $i \ne j$.  
If $\mu \in [x_i - \delta_0, x_i + \delta_0]$ for some $i$, 
then, 
$L(\theta; x_1, \dots, x_n) \le \frac{\sigma^{n-2}}{\pi \delta_0^{2(n-1)}}$.
If $\mu \notin [x_i - \delta_0, x_i + \delta_0]$ for every $i$, 
then, for each $j$, 
$\frac{\sigma}{\pi} \frac{1}{(x_j - \mu)^2 + \sigma^2} \le  \frac{\sigma}{\pi \delta_0^2}$.

Since $n \ge 3$, there exists $M_2 > M_1$ such that if either $|\mu| > M_2$, $\sigma > M_2$, or $|\mu| \le M_2$ and $\sigma < 1/M_2$, 
then, $L(\theta; x_1, \dots, x_n) \le \epsilon$.
Now \eqref{cpt} follows if we let $K := \left\{\mu + \sigma i : |\mu| \le M, \sigma \in [1/M, M] \right\}$. 
Thus we see that  there exists at least one element in $\H$ which attains the maximum of $L(\theta; x_1, \dots, x_n)$. 

Let $\z_0$ be an  element in $\H$ which attains the maximum of $L(\theta; x_1, \dots, x_n)$. 
Then, $\overline{\z_0} = q(\z_0)$. 
Consider the map $\Mob_{\z_0} \circ Q \circ \Mob_{\z_0}^{-1}$. 
Since $Q(\z_0) = \z_0$, 
this is a bijective holomorphic map from $\D$ to $\D$ such that $\Mob_{\z_0} \circ Q \circ \Mob_{\z_0}^{-1} (0) = 0$. 
Now by Schwarz's lemma \cite{Rudin1987}, 
we see that 
$\left| Q'(\z_0) \right| = \left| \left(  \Mob_{\z_0} \circ Q \circ \Mob_{\z_0}^{-1} \right)'(0) \right| \le 1$. 
We will show that $\left| Q'(\z_0) \right|  < 1$. 
Let us assume that $ \left| Q'(\z_0) \right| = 1$.
Then, for some $a \in \C$ with $|a| = 1$, 
$\Mob_{\z_0} \circ Q \circ \Mob_{\z_0}^{-1} (w) = aw$ for every $w \in \D$, and hence 
\[Q(\z) = \Mob^{-1}_{\z_0} \left(a \Mob_{\z_0}(\z)\right)
= \frac{(\z_0 - a \overline{\z_0}) \z + (1+a) |\z_0|^2}{(1-a)\z + a\z_0 - \overline{\z_0}}, \ \z \in \H.\]
This cannot occur if $n \ge 3$. 
See also Remark \ref{Rmk-Q} (i) below.
Hence, we see that 
$\left| Q'(\z_0) \right|  < 1$. 
See Remark \ref{Rmk-Q} (ii) below for an alternative proof of this inequality. 
Now by Schwarz's lemma, we see that $\left|\Mob_{\z_0} \circ Q \circ \Mob_{\z_0}^{-1}(w)\right| < |w|$ for every $w \in \D$ and $w \ne 0$. 
In particular there is no fixed point of $Q$ in $\H$ other than $\z_0$. 
This completes the proofs of assertions  (i) and (ii). 

Now we proceed to the proof of assertion (iii). 
For ease of notation, we set
$F := \Mob_{\z_0} \circ Q \circ \Mob_{\z_0}^{-1}$. 
Let $\widetilde \z \in \H$ and $\widetilde w := \Mob_{\z_0}(\widetilde \z)$.
Then, since $\Mob_{\z_0}$ is bijective, 
in order to show that $\lim_{m \to \infty} Q^m (\widetilde \z) = \z_0$, 
it suffices to show that $\lim_{m \to \infty} F^m (\widetilde w) = 0$. 
Let 
\[ G(w) := \begin{cases} \frac{F(w)}{w}, & w \ne 0,\\
 F'(0), &  w = 0.\end{cases} \]
 
It is easy to see that $F'(0) = Q'(\z_0)$.
Since $F(0) = 0$ and $F$ is holomorphic on $\D$, $G$ is also holomorphic on $\D$. 
Let 
$r := \max_{w \in \C : |w| \le |\widetilde w| } \left|G(w)\right|. $
Since $|F(w)| < |w|$ for every $w \in \D$ as in the proof of (ii) above, we see that $r < 1$. 
It holds that $|F(w)| \le r |w|$, for $w \in \D$ such that $|w| \le |\widetilde w|$.
Hence, $|F^n (\widetilde w)| \le r^m |\widetilde w| \le r^m \to 0$, $m \to \infty$.
Since 
\[ \frac{|Q^m (\widetilde \z) - \z_0|}{|Q^m (\widetilde \z) - \overline{\z_0}|} \le r^m  \left|\frac{\widetilde \z - \z_0}{\widetilde \z - \overline{\z_0}} \right| \le r^m, \]
the convergence of $(Q^m (\widetilde \z))_m$ to $\z_0$ is exponentially fast. 
\end{proof}

\begin{remark}\label{rem:may-be-slow}
Let $r$ be the constant in the proof of Theorem \ref{main}. 
Then, by the maximum principle, 
\begin{align*}
r &= \sup\left\{ \left|\frac{Q (\z) - \z_0}{Q ( \z) - \overline{\z_0}}\right| / \left|\frac{\z - \z_0}{\z - \overline{\z_0}}\right| :  \left|\frac{\z - \z_0}{\z - \overline{\z_0}}\right| \le  \left|\frac{\widetilde \z - \z_0}{\widetilde \z - \overline{\z_0}}\right|, \ \z \ne \z_0 \right\} \\
&= \max\left\{ \left|\frac{Q (\z) - \z_0}{Q ( \z) - \overline{\z_0}}\right| / \left|\frac{\z - \z_0}{\z - \overline{\z_0}}\right| :  \left|\frac{\z - \z_0}{\z - \overline{\z_0}}\right|  =  \left|\frac{\widetilde \z - \z_0}{\widetilde \z - \overline{\z_0}}\right|\right\}.
\end{align*}
Therefore, we see that
the constant $r$ depends on the sample $(x_1, \dots, x_n)$
and the starting point $\widetilde \z$. 
\end{remark}

\begin{proof}[Proof of Corollary \ref{stability}]
Recall that the function $Q$ depends on the sample.
So let us denote the function $Q$
for $\left(x_1^{(m)}, \dots, x_n^{(m)}\right)$ by $Q_m$,
and set $F_m (\z) := Q_m (\z) - \z$. 
This is holomorphic on $\H$ and $F_m (\z^{(m)}) = 0$. 

By Theorem \ref{main}, 
we see that if we take sufficiently small $\epsilon > 0$, 
then, $F_{\infty}$ has exactly one root in the ball
$B(\z^{(\infty)}, \epsilon)$,
centered at $\z^{(\infty)}$ and the radius is $\epsilon$,
which is $\z^{(\infty)}$. 
We see that 
$\min_{\z; |\z-\z^{(\infty)}| = \epsilon} \left|F_{\infty}(\z)\right| > 0$. 

By the assumption and the fact that each $F_m$ is a rational function, 
$F_m$ converges to $F_{\infty}$ uniformly on the closure of
$B\left(\z^{(\infty)}, \epsilon\right)$ as $m \to \infty$. 
Hence, there exists $N$ such that for every $m \ge N$, 
\[ \max_{\z; |\z-\z^{(\infty)}| = \epsilon} \left|F_{\infty}(\z) - F_m (\z) \right| < \min_{\z; |\z-\z^{(\infty)}| = \epsilon} \left|F_{\infty}(\z)\right|. \]
By Rouch\'e's theorem \cite{Rudin1987}, 
for every $m \ge N$, 
$F_m$ also has exactly one root in the ball $B(\z^{(\infty)}, \epsilon)$.
By  Theorem \ref{main}, it is $\z^{(m)}$. 
Hence $\left|\z^{(m)} - \z^{(\infty)}\right| \le \epsilon$. 
\end{proof}

\begin{remark}\label{Rmk-Q}
(i) There is an alternative proof of Theorem \ref{main} (i). 
The key observation is that $Q$ has at least $n^2 - 2n$ poles on $\R$.   
 We can show that the equation $\z = Q(\z)$ has at least $n^2 - 2n -1$ real solutions by repeated use of the intermediate value theorem. \\
(ii) There is an alternative proof of Theorem \ref{main} (ii). 
We first remark that $q(\z_0) = \overline{\z_0}$. 
Let $x_0$ and $y_0$ be the real and imaginary parts of $\z_0$. 
Then, $Q$ is differentiable at $\z_0$ and 
$Q'(\z_0) =  \left| q'(\z_0) \right|^2$. 
Hence it suffices to show that $\left| q'(\z_0) \right| < 1$. 
We see that for every $\z \in \H$, 
\[ q'(\z) = \frac{\left( \sum_{j=1}^{n} 1/(x_j - \z)\right)^2- n \sum_{j=1}^{n} 1/(x_j - \z)^2 }{\left( \sum_{j=1}^{n} 1/(x_j - \z)\right)^2}.  \]
Using this and $q(\z_0) = \overline{\z_0}$, 
by some calculations, 
we see that 
$|q'(\z_0)| = \frac{1}{n} \left| \sum_{j=1}^{n} \left( \frac{x_j - \overline{\z_0}}{x_j - \z_0} \right)^2\right|$.
Since $| x_j - \overline{\z_0}| = |x_j - \z_0 |, \ 1 \le j \le n$,
and $x_1, \dots x_n$, $n \ge 3$,  are distinctive, 
by the Cauchy--Schwarz inequality, 
we see that $| q'(\z_0) | < 1$.
\end{remark}

\subsection{The circular Cauchy case}\label{sec:circular-Cauchy}

The circular Cauchy distribution, also known as the wrapped Cauchy distribution, appears in the area of directional statistics. 
It is a distribution on the unit circle and is connected with the Cauchy distribution via M\"obius transforms.  
Such connection is considered by  \cite{McCullagh1996}. 
The circular-Cauchy distribution $P^{\mathrm{cc}}_{\psi}$ with parameter $\psi \in \D$ is the continuous distribution on $[0, 2\pi)$  
with density function $\frac{1}{2\pi} \frac{1 - |\psi|^2}{|\exp(ix) - \psi|^2}, \ \ x \in [0, 2\pi)$.

If a random variable $X$ follows the circular-Cauchy distribution $P^{\mathrm{cc}}_{\psi}$, 
then, $\Mob_{\theta}^{-1}(\exp(iX))$ follows the Cauchy distribution with parameter $\Mob_{\theta}^{-1}(\psi)$. 
Hence, for every $\theta \in \H$, 
$\hat{\psi} \in \D$ is the maximum likelihood estimate of $\{x_1, \dots, x_n\} \subset [0, 2\pi)$ from the  circular-Cauchy distribution if and only if $\Mob_{\theta}^{-1}(\hat{\psi}) \in \H$ is that
 of $\{\Mob^{-1}_{\theta}(\exp(i x_1)), \dots, \Mob^{-1}_{\theta}(\exp(i x_n))\}$ from the Cauchy distribution. 
For $\z \in \H$, let 
\[ q_{\theta}(\z) := \frac{\sum_{j=1}^{n} \Mob^{-1}_{\theta}(\exp(i x_j)) / (\Mob^{-1}_{\theta}(\exp(i x_j)) - \z)}{\sum_{j=1}^{n} 1 / (\Mob^{-1}_{\theta}(\exp(i x_j)) - \z)}. \]
and 
let $Q_{\theta}(\z) := q_{\theta}(q_{\theta}(\z))$.  
Then, $Q_{\theta}(\H) \subset \H$, 
and 
$\hat{\psi} \in \D$ is the maximum likelihood estimate of $\{x_1, \dots, x_n\}$ from the circular-Cauchy distribution if and only if 
$\hat{\psi} = \Mob_{\theta} \circ Q_{\theta} \circ \Mob_{\theta}^{-1}(\hat{\psi})$. 
For $w \in \D$, let 
\[ \tilde{q}(w) := \frac{\sum_{j=1}^{n} \exp(i x_j)/(1-w\exp(i x_j))}{\sum_{j=1}^{n} 1/(1- w\exp(i x_j))} \]
and $\tilde{Q}(w) := \tilde{q}(\tilde{q}(w))$.  
We easily see that for every $\theta \in \H$, 
$\tilde{Q}(w) = \Mob_{\theta} \circ Q_{\theta} \circ \Mob_{\theta}^{-1}(w), \ w \in \mathbb{D}$.

By Theorem \ref{main}, we see that 
\begin{Thm}
$\hat{\psi} \in \D$ is the maximum likelihood estimate
of $\{x_1, \ldots, x_n\}$ from the circular-Cauchy distribution
if and only if $\hat{\psi} = \tilde{Q}(\hat{\psi})$. 
Furthermore, for every $w \in \D$, 
$\lim_{m \to \infty} \tilde{Q}^m (w) = \hat{\psi}$.
\end{Thm}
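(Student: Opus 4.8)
The plan is to transfer Theorem \ref{main} from the Cauchy setting to the circular-Cauchy setting by conjugating with the M\"obius transformation $\Mob_\theta$, using the identity $\tilde{Q}(w) = \Mob_\theta \circ Q_\theta \circ \Mob_\theta^{-1}(w)$ recorded just before the statement. I would fix an arbitrary $\theta \in \H$ and write $y_j := \Mob_\theta^{-1}(\exp(i x_j)) \in \H$ for the transformed sample, so that, by the correspondence established above, $\hat\psi \in \D$ is the circular-Cauchy maximum likelihood estimate of $\{x_1, \dots, x_n\}$ if and only if $\Mob_\theta^{-1}(\hat\psi)$ is the Cauchy maximum likelihood estimate of $\{y_1, \dots, y_n\}$. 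All of the analytic content, namely existence, uniqueness, and global exponential convergence of the iteration, then comes for free from Theorem \ref{main} applied to $\{y_1, \dots, y_n\}$.

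First I would promote the single-step conjugacy to the iterates: since $\Mob_\theta^{-1} \circ \Mob_\theta$ is the identity on $\H$, a one-line induction gives $\tilde{Q}^m = \Mob_\theta \circ Q_\theta^m \circ \Mob_\theta^{-1}$ on $\D$ for every $m \ge 1$.

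For the equivalence I would chain three equivalences. By the correspondence above, $\hat\psi$ is the circular-Cauchy estimate iff $\Mob_\theta^{-1}(\hat\psi)$ is the Cauchy estimate of $\{y_1, \dots, y_n\}$; by Theorem \ref{main}(i)--(ii) this holds iff $\Mob_\theta^{-1}(\hat\psi)$ solves the fixed-point equation $Q_\theta(z) = z$ in $\H$, the solution being unique; applying the bijection $\Mob_\theta$ to both sides of $Q_\theta(\Mob_\theta^{-1}(\hat\psi)) = \Mob_\theta^{-1}(\hat\psi)$ and invoking the conjugacy, this is exactly $\tilde{Q}(\hat\psi) = \hat\psi$, where uniqueness from Theorem \ref{main}(i) supplies the converse direction. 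For the convergence clause, given $w \in \D$ I would set $z := \Mob_\theta^{-1}(w) \in \H$, use the iterate conjugacy to write $\tilde{Q}^m(w) = \Mob_\theta(Q_\theta^m(z))$, apply Theorem \ref{main}(iii) to obtain $Q_\theta^m(z) \to \Mob_\theta^{-1}(\hat\psi)$, and pass to the limit using continuity of $\Mob_\theta$ to conclude $\tilde{Q}^m(w) \to \hat\psi$.

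There is no genuine obstacle here: the statement is a clean corollary of Theorem \ref{main}, as the phrase preceding it signals. The only steps needing care are the verification of the single-step conjugacy $\tilde{Q} = \Mob_\theta \circ Q_\theta \circ \Mob_\theta^{-1}$, which follows by substituting $y_j = \Mob_\theta^{-1}(\exp(i x_j))$ into the definition of $q_\theta$ and simplifying to the $\theta$-free formula for $\tilde q$, and checking that $\tilde q$, hence $\tilde{Q}$, is well defined on $\D$, i.e. that the denominator $\sum_{j} 1/(1 - w \exp(i x_j))$ does not vanish; this is inherited from the fact that $Q_\theta$ is a well-defined self-map of $\H$. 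I would also note, as a consistency check, that $\tilde{Q}$ itself carries no dependence on $\theta$, so although the proof selects a particular $\theta \in \H$, the resulting characterization of $\hat\psi$ is the same for every choice.
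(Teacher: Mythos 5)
Your proposal is correct and follows exactly the route the paper intends: the paper's own proof is simply the phrase ``By Theorem \ref{main}'', relying on the conjugacy $\tilde{Q} = \Mob_\theta \circ Q_\theta \circ \Mob_\theta^{-1}$ and the MLE correspondence set up in the preceding paragraphs, which is precisely what you spell out. Your version just makes the implicit details (iterating the conjugacy, passing to the limit by continuity of $\Mob_\theta$) explicit.
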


This theorem also gives an algorithm to compute the maximum likelihood estimate which is different from \cite{Kent1988} and \cite{Auslan1995}. 
As we see, the convergence of the algorithm is more easily shown
than \cite{Kent1988} in which an iterative reweighting algorithm
for the maximum likelihood estimation
of the angular Gaussian distribution is considered.

\section{Algebraic approach for the maximum likelihood estimates}\label{sec:poly}

In this section we will handle the likelihood equation
$z = Q(z)$ in $\H$ in an algebraic manner.

Let $P_n$ and $T_n$ be two polynomials  over $\R$ such that 
$\z - Q(\z) = P_n (\z)/T_n (\z)$ 
and the greatest common divisor of $P_n$ and $T_n$ over $\R$ is one.
Recall that $Q(\z)$ depends on the observed sample 
$(x_1, x_2, \ldots, x_n)$.
Then, the degree of $P_n$ is smaller than or equal to $n^2 - 2n + 2$. 
We use the fact that $\z$ is a solution of the equation $\z = Q(\z)$ if and only if $\z$ is a solution of $P_n$. 
We remark that the equation $\z = Q(\z)$ holds for each $z = x_1, x_2, \ldots, x_n$.
Hence, by the cofactor theorem, $R_n (\z) := P_n (\z) / h(\z)$ is a polynomial of degree $n^2 - 3n + 2$ or smaller over $\R$. 
If $\hat{\theta}$ is the maximum likelihood estimate of
a distinctive sample of size $n$ from the Cauchy distribution,
then, $R_n (\hat{\theta}) = 0$, 
in particular,  $\hat{\theta}$ is an algebraic number. 

Ferguson \cite{Ferguson1978} derived closed form formulae for
the solution $\hat{\mu}$ and $\hat{\sigma}$,
$\hat{\theta} = \hat{\mu} + i \hat{\sigma}$,
to the likelihood (simultaneous)
equations \eqref{eq:classical-likelihood-equation}
when $n = 3$ and $4$ under an assumption that the sample is ordered \cite{Ferguson1978}.
But he omitted derivations of them. 
\cite{McCullagh1996} gave derivations of them by using geometric considerations. %
In our algebraic setting, it is easy to derive the
formulae.

\subsection{The case of $n = 3$}

Let $s_j, j = 1,2,3$, be the elementary symmetric polynomials
of samples $\{x_1, x_2, x_3\}$ (See Remark \ref{rem:symmetric polynomials}),
specifically, $s_1 := x_1 +x_2 +x_3, s_2 := x_1 x_2 + x_2 x_3 + x_3 x_1, s_3 := x_1 x_2 x_3$.

Then, $h(\z) = s_3- s_2 \z + s_1 \z^2 - \z^3$. 
We see that 
\[R_3 (\z) =  (3 s_2-s_1^2) \z^2 + (-9s_3 +s_1 s_2) \z+3s_1 s_3 - s_2^2.\]

By solving the equation $R_3 (\z) = 0$, 
we see that the maximum likelihood estimate of $\{x_1, x_2, x_3\}$ is given by 
\[ \hat{\theta} = \frac{s_1 s_2 - 9s_3 +  i \sqrt{-81 s_3^2 + (54 s_1 s_2 - 12 s_1^3)s_3 - 12 s_2^3 + 3 s_1^2 s_2^2} }{2(s_1^2 - 3 s_2)} \]
\[ = \frac{(x_1 + x_2) (x_2 + x_3) (x_3 + x_1) - 8x_1 x_2 x_3 + \sqrt{3} |(x_2 - x_1)(x_3 - x_1)(x_3 - x_2)| i}{2(x_1^2 + x_2^2 + x_3^2 - x_1 x_2 - x_2 x_3 - x_3x_1)}. \]

\subsection{The case of $n = 4$}

As above,  
we let $s_i, i = 1, \dots 4$, be the elementary symmetric polynomials of samples $\{x_1, \dots, x_4\}$, specifically, 
$s_1 := x_1 +x_2 +x_3+x_4, s_2 := x_1 x_2 + x_2 x_3 + x_3 x_4 + x_2 x_3 + x_2 x_4 + x_3 x_4,  s_3 := x_1 x_2 x_3 + x_1 x_3 x_4 + x_1 x_2 x_4 + x_2 x_3 x_4, \ s_4 := x_1x_2x_3x_4$.

Then, we see that 
\begin{multline*}
R_4 (\z) =  (8s_3-4s_1s_2+s_1^3)\z^6
+ (-32s_4-4s_1s_3+8s_2^2-2s_1^2s_2)\z^5 + (40s_1s_4-20s_2s_3+5s_1^2s_3)\z^4 \\
+ (-20s_1^2s_4+20s_3^2)\z^3 + (-40s_3s_4+20s_1s_2s_4-5s_1s_3^2)\z^2 \\
+(32s_4^2+4s_1s_3s_4 -8s_2^2s_4 +2s_2s_3^2)\z
-8s_1s_4^2+4s_2s_3s_4-s_3^3. 
\end{multline*}
Without loss of generality, we can assume that $x_1 < x_2 < x_3 < x_4$. 
Then, $R_4$ is factored as $R_4 (\z) = c F_1 (\z) F_2 (\z) F_3 (\z)$,
where $c$ is a constant and 
\begin{align*}
F_1 (\z) &:= (x_1 - x_2 - x_3 + x_4)\z^2 - 2(x_1x_4 -x_2x_3)\z - x_1x_2x_3 + x_1x_2x_4 + x_1x_3x_4 - x_2x_3x_4, \\
F_2 (\z) &:= (x_1 - x_2 + x_3 - x_4)\z^2 - 2(x_1x_3 - x_2x_4)\z + x_1x_2x_3 - x_1x_2x_4 + x_1x_3x_4 - x_2x_3x_4, \\
F_3 (\z) &:= (x_1 + x_2 - x_3 - x_4)\z^2 - 2(x_1x_2 -x_3x_4)\z + x_1x_2x_3 + x_1x_2x_4 - x_1x_3x_4 - x_2x_3x_4. 
\end{align*}

We denote the discriminants of a quadratic polynomial $F_j$ by $D(F_j)$,
$j = 1, 2, 3$.
They can be computed as
\begin{align*}
D(F_1) &= 4(x_3 - x_4)(x_2 - x_4)(x_1 - x_3)(x_1 - x_2),\\
D(F_2) &= -4(x_3 - x_4)(x_2 - x_3)(x_1 - x_4)(x_1 - x_2),\\
D(F_3) &= 4(x_2 - x_4)(x_2 - x_3)(x_1 - x_4)(x_1 - x_3). 
\end{align*}
Thus we see that  $D(F_2) < 0$, $D(F_1) > 0$ and $D(F_3) > 0$. 

Hence, the maximum likelihood estimate
$\hat{\theta}$ of $\{x_1, x_2, x_3, x_4\}$ is given by 
\begin{equation}\label{eq:closed4}
\hat{\theta} = \frac{x_2x_4 - x_1x_3}{x_4 - x_3 + x_2 - x_1} + \frac{\sqrt{(x_4 - x_3)(x_3 - x_2)(x_4 - x_1)(x_2 - x_1)}}{x_4 - x_3 + x_2 - x_1}i. 
\end{equation} 
The factorization and the computations of discriminants are easily done by using computer algebra systems.

\subsection{The case of $n \ge 5$}
To our knowledge, there are no results for such formulae when $n \geq 5$.
We will show here that the answer is negative,
that is,
there are no algebraic closed formulae for the maximum likelihood estimates
in the following sense:

\begin{Def}
Let $D := \{(x_1, \dots, x_n) \in \R^n: x_1 < \dots < x_n \}$
be the set of all ordered sample of size $n$. 
We call a function $F : D \to \H$ {an algebraic closed-form formula
for the maximum likelihood estimator of samples of size $n$
from the Cauchy distribution} if\\
(i) For every $(x_1, \dots, x_n) \in D$,
	$F(x_1, \dots, x_n)$ is the maximum likelihood estimate of
	observed sample $(x_1, \dots, x_n)$ from the Cauchy distribution. \\
(ii) For every $(x_1, \dots, x_n) \in D \cap \Q^n$,
	there exists a finite sequence of fields
	$\Q = K_0 \subset K_1 \subset \dots \subset K_r$
	such that $F(x_1, \dots, x_n) \in K_r$
	and $K_i = K_{i-1}(\sqrt[n_i]{\alpha_{i}})$
	for some $\alpha_i \in K_{i-1}$,
	where we assume that $\sqrt[n_i]{\alpha_{i}}$
	is a solution of $x^{n_i} - \alpha_i  = 0$ in $\C$. 
\end{Def}

In the above definition, we allow orderings of samples. 
For $n = 3$ or $4$, it is shown in the above subsections that there exists
an algebraic closed-form formula. 
For $n \geq 5$, let us start with showing the main result in this section.

\begin{Thm}
If there exists $(x_1, \dots, x_n) \in D \cap \Q^n$ such that $R_n$ is irreducible over $\Q$ and the Galois group of $R_n$ over $\Q$ is not solvable, then, 
there is no algebraic closed-form formula for the maximum likelihood estimate
of samples of size $n$.  
\end{Thm}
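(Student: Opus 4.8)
The plan is to argue by contradiction using the classical Galois-theoretic criterion for solvability by radicals (the Abel--Ruffini theorem). Suppose an algebraic closed-form formula $F$ for the maximum likelihood estimator of samples of size $n$ does exist. Fix the special ordered rational sample $(x_1, \dots, x_n) \in D \cap \Q^n$ provided by the hypothesis, for which $R_n$ is irreducible over $\Q$ and has non-solvable Galois group. First I would note that, since $x_1, \dots, x_n \in \Q$, all the elementary symmetric polynomials $s_j$ are rational, so $R_n$ genuinely has coefficients in $\Q$ and the phrase ``Galois group of $R_n$ over $\Q$'' is meaningful; this also makes the radical tower condition (ii) applicable verbatim.

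By property (i) of the definition, $F(x_1, \dots, x_n)$ equals the maximum likelihood estimate $\hat{\theta}$ of this sample, and by the construction earlier in this section $R_n(\hat{\theta}) = 0$; thus $\hat{\theta}$ is a root of $R_n$, and since $R_n$ is irreducible it is, up to a scalar, the minimal polynomial of $\hat{\theta}$ over $\Q$. By property (ii), there is a tower $\Q = K_0 \subset K_1 \subset \dots \subset K_r$ with $\hat{\theta} \in K_r$ and each $K_i = K_{i-1}(\sqrt[n_i]{\alpha_i})$, which is precisely a radical extension in the standard sense. The goal is then to convert the statement ``$\hat{\theta}$ lies in a radical extension'' into ``the Galois group of $R_n$ is solvable,'' contradicting the hypothesis.

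The main technical step, and the one requiring care, is exactly this conversion: the standard solvability theorem asserts that the \emph{splitting field} of $R_n$ is contained in a radical extension if and only if the Galois group is solvable, whereas a priori I only know that a \emph{single} root $\hat{\theta}$ lies in the radical tower $K_r$. To bridge this gap I would pass to the Galois closure: enlarge $K_r$ to a Galois radical extension $L/\Q$, obtained by adjoining a suitable root of unity and forming the compositum of the finitely many $\Q$-conjugates of $K_r$, so that each step of the enlarged tower is cyclic and $L/\Q$ is normal. Since $L$ is normal over $\Q$ and contains the root $\hat{\theta}$ of the irreducible polynomial $R_n$, it contains every conjugate of $\hat{\theta}$, hence the entire splitting field $\Sigma$ of $R_n$. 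A radical Galois extension has solvable Galois group, so $\mathrm{Gal}(L/\Q)$ is solvable, and therefore so is its quotient $\mathrm{Gal}(\Sigma/\Q)$, which is the Galois group of $R_n$. This contradicts the assumption that the Galois group of $R_n$ is not solvable, and the contradiction establishes that no algebraic closed-form formula can exist.

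I expect the routine part to be invoking Abel--Ruffini together with the structure theory of radical extensions. The only point demanding genuine attention is the Galois-closure step that upgrades membership of the single value $\hat{\theta}$ in a radical tower to solvability of the full Galois group of $R_n$, together with the minor bookkeeping verifying that $R_n$ really has rational coefficients so that the solvability obstruction supplied by the hypothesis applies directly.
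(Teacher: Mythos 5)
Your proposal is correct and follows essentially the same route as the paper: both pass from the radical tower containing the single root $\hat{\theta}$ to a Galois (normal) radical extension with solvable group, observe that normality forces this extension to contain the full splitting field of the irreducible polynomial $R_n$, and conclude solvability of $\mathrm{Gal}(R_n/\Q)$ as a quotient, contradicting the hypothesis. Your write-up merely spells out the ``standard argument'' (adjoining roots of unity and taking the compositum of conjugates) that the paper leaves implicit.
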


\begin{proof}
Let $\hat{\theta} \in \H$ be the maximum likelihood estimate of
$(x_1, \dots, x_n)$ and
$\overline{\Q(\hat{\theta})}$ be the Galois closure of the field extension $\Q(\hat{\theta}) / \Q$. 
Since $R_n$ is irreducible over $\Q$, the minimal polynomial of $\hat{\theta}$ over $\Q$ is $R_n$. 
Hence, 
$\overline{\Q(\hat{\theta})} = L$, where $L$ denotes the minimal splitting field of $R_n$ over $\Q$. 

If there exists an algebraic closed-form formula, 
then, there exists a finite sequence of fields $\Q = K_0 \subset K_1 \subset \dots \subset K_r$ such that $\hat{\theta} \in K_r$ and $K_i = K_{i-1}(\sqrt[n_i]{\alpha_{i}})$ for some $\alpha_i \in K_{i-1}$ and $n_i \ge 2$, where we assume that  $\sqrt[n_i]{\alpha_{i}}$ is a solution of $x^{n_i} - \alpha_i  = 0$ in $\C$. 
Then, by a standard argument in the Galois theory, we can see that there exists a Galois extension $K_r' / \Q$ such that $K_r \subset K_r'$ and 
 $\mathrm{Gal}(K_r' / \Q)$ is solvable. 
Since $\Q(\hat{\theta}) \subset K_r$ and $L / \Q$ is a Galois extension, 
it holds that 
$L \subset K_r'$. 
Since there exists a surjective homomorphism from $\mathrm{Gal}(K_r' / \Q)$ to $\mathrm{Gal}(L / \Q)$, $\mathrm{Gal}(L / \Q)$ is also solvable. 
However, we can see that $\mathrm{Gal}(L / \Q)$ is not solvable.
\end{proof}

\begin{Cor}
For $n = 5,6,7$, there is no algebraic closed-form formula for the maximum likelihood estimates of samples of size $n$ from the Cauchy distribution. 
\end{Cor}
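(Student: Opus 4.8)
The plan is to reduce the Corollary to the preceding Theorem: for each $n \in \{5,6,7\}$ it suffices to exhibit a single ordered rational sample $(x_1,\dots,x_n) \in D \cap \Q^n$ for which $R_n$ is irreducible over $\Q$ and has non-solvable Galois group. Because the hypothesis of the Theorem is purely existential, I need not understand $R_n$ for all samples; one explicit witness for each $n$ suffices, and I would delegate the algebra to a computer algebra system, as already anticipated in the introduction. The $n=4$ factorization shows that $R_n$ need not be irreducible in general, so the task is genuinely to \emph{find} a good sample rather than to treat an arbitrary one.

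First I would fix small distinct-integer tuples $(x_1,\dots,x_n)$ and compute $R_n \in \Q[\z]$ exactly from its definition $R_n(\z) = P_n(\z)/h(\z)$, where $P_n$ is the numerator of $\z - Q(\z)$ after clearing denominators. This yields an explicit polynomial of degree $(n-1)(n-2)$, namely $12$, $20$, and $30$ for $n=5,6,7$. I would then test irreducibility over $\Q$, a routine and decidable task for a CAS (via the factorization algorithm, or by finding a prime $p$ modulo which $R_n$ remains irreducible), iterating over candidate tuples until the test succeeds.

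The delicate point is to certify that $\mathrm{Gal}(R_n/\Q)$ is not solvable. Rather than determine the group outright, which is awkward in degrees $20$ and $30$, I would invoke Dedekind's theorem: for each prime $p$ not dividing the discriminant of $R_n$, the degrees of the irreducible factors of $R_n \bmod p$ are the cycle lengths of a Frobenius element of $\mathrm{Gal}(R_n/\Q) \subseteq S_d$, with $d = (n-1)(n-2)$. Factoring $R_n$ modulo several such primes produces concrete cycle types; combined with transitivity (from irreducibility of $R_n$) and the appearance of a suitable prime-length cycle, standard criteria such as Jordan's theorem force the group to contain $A_d$, which is non-solvable for $d \ge 5$. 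As a cross-check, a dedicated Galois-group routine (Magma, \texttt{PARI/GP}, or GAP) can confirm the group is $S_d$ or $A_d$ directly.

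The main obstacle is exactly this certification step: the relevant Galois computations live in degrees up to $30$, so a naive determination of the group is impractical, and I must either assemble enough mod-$p$ cycle-type data to squeeze $\mathrm{Gal}(R_n/\Q)$ between $A_d$ and $S_d$ or rely on specialized routines whose output I would want to verify independently. Once non-solvability is established for one witness sample at each $n \in \{5,6,7\}$, the preceding Theorem applies verbatim and yields the non-existence of an algebraic closed-form formula, completing the proof.
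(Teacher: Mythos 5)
Your overall reduction is exactly the paper's: for each $n\in\{5,6,7\}$ exhibit one rational witness sample, verify with a computer algebra system that $R_n$ is irreducible over $\Q$ with non-solvable Galois group, and invoke the preceding Theorem (the paper's witnesses are $(-3,-1,2,3,4)$, $(-3/2,-1/2,0,1/5,4/3,22/7)$ and $(-8,-5,-3,-1,2,7,10)$). The gap is in your certification step. You propose to prove non-solvability by showing $\mathrm{Gal}(R_n/\Q)$ contains $A_d$, $d=(n-1)(n-2)$, via Dedekind cycle types plus Jordan's theorem. But these Galois groups do \emph{not} contain $A_d$: for the $n=5$ witness the paper records $|\mathrm{Gal}(L/\Q)|=46080=2^{6}\cdot 6!$, the order of the wreath product $S_2\wr S_6$, whereas $|A_{12}|=239500800$. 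The group is transitive but imprimitive (the twelve roots fall into six blocks of two), and in such a group every cycle length has the form $k$ or $2k$ with $k\le 6$; in particular no $7$- or $11$-cycle can occur, so the ``suitable prime-length cycle'' you hope to detect by factoring $R_5$ modulo primes will never appear, and Jordan's theorem never applies. The same block structure is to be expected for $n=6,7$, and plausibly for every admissible sample, since the pairing of roots reflects the symmetry of the underlying system $\tilde h(\theta,w)=0$ in its two arguments; so switching witnesses will not rescue the strategy.

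The fix is to test solvability directly rather than aim for $A_d$: either have the CAS compute the Galois group and apply a solvability test (this is what the paper implicitly does with Magma), or exploit the imprimitivity --- the action on the six blocks gives a homomorphism onto a transitive subgroup of $S_6$, and it suffices to show that this quotient is non-solvable (e.g.\ that it is $S_6$ or $A_6$), since every quotient of a solvable group is solvable. Your mod-$p$ cycle-type data can still feed such an argument, but it must be aimed at the degree-$6$ (resp.\ degree-$10$, degree-$15$) block quotient, not at $A_{12}$, $A_{20}$ or $A_{30}$.
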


\begin{proof}
For the following specific choices of samples, 
we will see that $R_n$ is irreducible over $\Q$ and $\mathrm{Gal}(L / \Q)$ is  not solvable for the minimal splitting field $L$ of $R_n$. 
Let $n = 5$.  
The above holds for $(x_1, x_2, x_3, x_4, x_5) = (-3, -1, 2, 3, 4)$. 
Let $n = 6$.  
The above holds for $(x_1, x_2, x_3, x_4, x_5, x_6) = (-3/2, -1/2, 0, 1/5, 4/3, 22/7)$. 
Let $n = 7$. 
The above holds for \\
$(x_1, x_2, x_3, x_4, x_5, x_6, x_7) = (-8,-5,-3, -1, 2,7,10)$. 
\end{proof}

We conjecture that 
there are no algebraic closed-form formulae
for the maximum likelihood estimates
of samples of size $n$ from the Cauchy distribution
for all $n \ge 5$.

\begin{remark}
$\mathrm{Gal}(L / \Q)$ in the above proof can be computed 
by using a computer algebra system, say, Magma.
If $(x_1, x_2, x_3, x_4, x_5) = (-3, -1, 2, 3, 4)$, then,
the following assertions hold. \\
(i) $\left|\mathrm{Aut}(\Q(\hat{\theta})/\Q)\right| \le  \left[\Q(\hat{\theta}) : \Q\right] = \deg(R_5) = 12$.\\
(ii) $[L : \Q] = |\mathrm{Gal}(L/ \Q)| = 46080$.\\
(iii) $R_5 (\z) = 8125\z^{12}-109500\z^{11}+300400\z^{10}+2485600\z^9-19364585\z^8 +41746540\z^7+37375695\z^6-301644350\z^5+341202840\z^4
+365505300\z^3-940185495\z^2+316153530\z+227200140$.
\end{remark}

\begin{remark}
We remark that for some $(x_1, \dots, x_n)$, the irreducibility of $R_n$ or  the non-solvability of the Galois group of $R_n$ fails. 
For example, take $(x_1, x_2, x_3, x_4, x_5) = (-2, -1, 0, 1, 2)$.
In this case, we see that 
\[ 
\begin{split}
R_5 (\z) &= 625\z^{12}-8750\z^{10}+22750\z^8-3625\z^6-31300\z^4+16400\z^2-960\\
 &= 5 (5\z^4 - 5\z^2 - 12) (25\z^8 - 325\z^6 + 645\z^4 - 330\z^2 + 16).
\end{split}
\]
The Galois group of $R_5$ over $\Q$ is solvable and
the maximum likelihood estimate for this sample is given by 
$\hat{\theta} = \sqrt{\frac{\sqrt{53/5}-1}{2}} i$.
\end{remark}

\section{Some properties of the maximum likelihood estimates}\label{sec:properties}

In this section we gather some supplementary facts
which are easily obtained in our setting.

\subsection{Relative position of the maximum likelihood estimates}

As we have already mentioned in the introduction,
one of the most prominent properties of the Cauchy distribution is 
that it allows the existence of outliers.
In this subsection, we are concerned with a relative position of
the maximum likelihood estimate with respect to the
minimum and maximum values of the observed sample.
{\it We assume throughout this subsection that the observed sample
is ordered as $x_1 < x_2 < \dots < x_n$.} 
Recall that $q(\theta) = \theta - n \frac{h(\theta)}{h'(\theta)}$.
We denote by $\overline{B(\theta, r)}$ the closed
ball centered at $\theta\in \C$ and
radius $r$.

\begin{Prop}
Let $\theta \in \C$ and $R := \max\{|\theta - x_j| : 1 \le j \le n\}$. 
Then, 
$q\left(\C \setminus \overline{B(\theta, R)}\right) \subset \overline{B(\theta, R)}$.
\end{Prop}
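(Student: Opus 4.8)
The plan is to transport the whole configuration by an inversion centred at the evaluation point and then invoke nothing more than the convexity of a disc. Throughout write $\zeta$ for a generic point of $\C\setminus\overline{B(\theta,R)}$ (so that $\theta$ remains the fixed centre and $R=\max_j|\theta-x_j|$). The starting observation is the identity
\[
\frac{1}{q(\zeta)-\zeta}=\frac{1}{n}\sum_{j=1}^{n}\frac{1}{x_j-\zeta},
\]
which follows at once from $q(\zeta)=\zeta-n\,h(\zeta)/h'(\zeta)$ together with the logarithmic derivative $h'(\zeta)/h(\zeta)=-\sum_{j}1/(x_j-\zeta)$; equivalently, $q(\zeta)-\zeta$ is the harmonic mean of $x_1-\zeta,\dots,x_n-\zeta$.

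Next I would introduce the inversion $\Phi(w):=1/(w-\zeta)$, a M\"obius transformation whose only pole is at $\zeta$. The identity above reads precisely $\Phi(q(\zeta))=\frac{1}{n}\sum_{j=1}^{n}\Phi(x_j)$, so that $\Phi(q(\zeta))$ is the ordinary barycentre of the points $\Phi(x_1),\dots,\Phi(x_n)$. Since $\zeta\notin\overline{B(\theta,R)}$, the pole of $\Phi$ does not lie in the compact disc $\overline{B(\theta,R)}$; hence its image $D:=\Phi(\overline{B(\theta,R)})$ is compact, and because a M\"obius map carries circles to circles (the boundary circle avoids $\zeta$) the set $D$ is again a closed disc, in particular convex. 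Moreover $0=\Phi(\infty)\notin D$, since $\infty\notin\overline{B(\theta,R)}$.

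Finally, each $x_j\in\overline{B(\theta,R)}$ gives $\Phi(x_j)\in D$, so the barycentre $\Phi(q(\zeta))$ lies in the convex set $D$. Because $0\notin D$ this barycentre is nonzero, which incidentally shows $\sum_j 1/(x_j-\zeta)\neq0$ and hence that $q(\zeta)$ is finite and well defined; applying $\Phi^{-1}$, which carries $D$ back onto $\overline{B(\theta,R)}$, yields $q(\zeta)=\Phi^{-1}(\Phi(q(\zeta)))\in\overline{B(\theta,R)}$, which is the assertion.

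The only step requiring genuine care is the claim that $\Phi$ maps the closed disc onto a closed disc rather than onto the complementary unbounded region: this is forced by the compactness of $\overline{B(\theta,R)}$, which in turn rests on the hypothesis that $\zeta$, the pole of $\Phi$, lies outside $\overline{B(\theta,R)}$. Everything else is either a one-line computation or the elementary fact that the barycentre of finitely many points of a convex set lies in that set.
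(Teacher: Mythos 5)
Your proof is correct. It differs from the paper's in an instructive way: the paper simply quotes Laguerre's separation theorem (all zeros of $nh(z)-(z-\zeta)h'(z)$ lie in $\overline{B(\theta,R)}$ when $\zeta$ does not), deduces that every $q$-preimage of an exterior point is interior, and then inverts the statement; your argument is, in effect, a self-contained proof of exactly the instance of Laguerre's theorem being used, since the classical proof of that theorem is precisely your inversion-plus-convexity device applied to the identity $1/(q(\zeta)-\zeta)=\frac1n\sum_j 1/(x_j-\zeta)$. What each buys: the paper's version is shorter but leans on an external reference, and its closing step (``since $q$ is bijective'') is phrased loosely --- $q$ is a rational map of degree $n-1$, not a bijection of $\C$, and what is really meant is that no exterior point can map to an exterior point because all preimages of exterior points are interior. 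Your version is direct (you show $q(\zeta)\in\overline{B(\theta,R)}$ for each exterior $\zeta$ rather than arguing about preimages), is elementary, and explicitly disposes of the degenerate possibility $h'(\zeta)=0$ by observing that the barycentre lies in a disc avoiding $0$, so $q(\zeta)$ is finite. The one step you flag as delicate --- that $\Phi$ sends the closed disc to a closed disc rather than to the closed exterior --- is handled correctly: compactness of the image follows because the pole $\zeta$ is outside $\overline{B(\theta,R)}$, and $0=\Phi(\infty)$ is excluded because $\infty$ is too.
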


\begin{proof}
Recall that all zeroes $h(z)$ are contained in $\overline{B(\theta,R)}$.
Take $\zeta$ outside of $\overline{B(\theta,R)}$.
Then Laguerre's separation theorem (see, e.g., \cite[p.~20]{Borwein1995})
asserts that all zeroes of $nh(z) - (z - \zeta) h'(z)$ are
also contained in $\overline{B(\theta,R)}$, 
in other words, for any $\zeta \in \C \setminus \overline{B(\theta,R)}$,
there exists $z \in \overline{B(\theta,R)}$ such that $q(z) = \zeta$.
It means that $\C \setminus \overline{B(\theta,R)}$ is contained in $q(\overline{B(\theta,R)})$.
Since $q$ is bijective, we have the conclusion.
\end{proof}

\begin{Cor}\label{cor:halfcircle}
The maximum likelihood estimate $\hat{\theta}$ of 
the ordered sample
$\{x_1, x_2, \ldots, x_n\}$ satisfies $\left| \hat{\theta} - \frac{x_1 + x_n}{2} \right| \le \frac{x_n - x_1}{2}.$ 
\end{Cor}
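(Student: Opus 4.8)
The plan is to apply the preceding Proposition with the center taken to be the midpoint of the extreme observations. Set $c := \frac{x_1 + x_n}{2} \in \R$ and $r := \frac{x_n - x_1}{2}$. Because the sample is ordered, $x_1 < x_2 < \dots < x_n$, every $x_j$ lies in the interval $[x_1, x_n]$, so the quantity $R := \max\{|c - x_j| : 1 \le j \le n\}$ appearing in that Proposition equals $\max\{|c - x_1|, |c - x_n|\} = r$. Thus, applied with $\theta = c$, the Proposition yields the inclusion $q\left(\C \setminus \overline{B(c, r)}\right) \subset \overline{B(c, r)}$.

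Next I would recall the likelihood equation in the form \eqref{eq:Cauchy-likelihood-equation2}, which states precisely that $q(\hat{\theta}) = \overline{\hat{\theta}}$ at the maximum likelihood estimate. The remaining idea is to argue by contradiction: suppose $\hat{\theta} \notin \overline{B(c, r)}$, i.e. $\hat{\theta} \in \C \setminus \overline{B(c, r)}$. Then the inclusion above forces $q(\hat{\theta}) \in \overline{B(c, r)}$, so that $|q(\hat{\theta}) - c| \le r$.

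The observation that closes the argument is that complex conjugation fixes the real point $c$ and therefore preserves distance to it: since $c \in \R$, we have $|\overline{\hat{\theta}} - c| = |\overline{\hat{\theta} - c}| = |\hat{\theta} - c|$. Combining this with $q(\hat{\theta}) = \overline{\hat{\theta}}$ gives $|\hat{\theta} - c| = |q(\hat{\theta}) - c| \le r$, contradicting the assumption $|\hat{\theta} - c| > r$. Hence $\hat{\theta} \in \overline{B(c, r)}$, which is exactly the claimed inequality $\left| \hat{\theta} - \frac{x_1 + x_n}{2} \right| \le \frac{x_n - x_1}{2}$.

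There is no serious obstacle here; the proof is a short deduction from the preceding Proposition together with the fixed-point relation for $q$. The only points requiring care are the verification that $R = r$ under the ordering hypothesis, which uses that the midpoint is equidistant from and maximally distant at the two endpoints, and the realness of $c$, which makes conjugation an isometry fixing the center of the ball.
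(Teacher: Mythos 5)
Your proof is correct and follows essentially the same route as the paper: apply the preceding Proposition with the center at the midpoint $\frac{x_1+x_n}{2}$ of the extreme observations (noting $R = \frac{x_n-x_1}{2}$), combine the resulting inclusion with the fixed-point relation $q(\hat{\theta}) = \overline{\hat{\theta}}$ from \eqref{eq:Cauchy-likelihood-equation2}, and use that conjugation is an isometry fixing the real center. Your write-up is in fact somewhat more explicit than the paper's terse version of the same argument.
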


\begin{proof}
Consider a closed ball $B \equiv \overline{B(\frac{x_1 + x_n}{2},\frac{x_n - x_1}{2})}$.
If $\theta$ is outside of $B$, $q(\theta) \neq \theta$.
Since the maximum likelihood estimate $\hat{\theta}$ satisfies
$\hat{\theta} = \overline{q(\theta)}$, and the center of $B$ lies on the
real axis, we conclude that $\hat{\theta} \in B$.
\end{proof}

This fact leads us to the following definition. 
\begin{Def}
Let $\hat{\theta} \equiv \hat{\theta}(x_1, x_2, \ldots, x_n)$ be the maximum likelihood estimate of 
the ordered sample $\{x_1, x_2, \ldots, x_n\}$ from the Cauchy distribution.
(i) The relative position of the
	$\hat{\theta}$ in $ \H \cap \D$  is defined by $\xi := \frac{2\hat{\theta} - (x_n + x_1)}{x_n - x_1} \in  \H \cap \D$. \\
(ii) The relative distance of $\hat{\theta}$
	to the boundary is defined by $1 - \frac{|2\hat{\theta} - (x_n + x_1)|}{x_n - x_1}$.
\end{Def}

These notions will be connected with the speeds of the convergences in some numerical schemes in Appendix \ref{sec:numeric}. 
As the following shows, the estimate in Corollary \ref{cor:halfcircle} is best in a sense if $n$ is even. 
\begin{Prop}
(i) Let $n \ge 4$ be an even number. 
For every $\xi \in \H \cap \D$, 
there exists an ordered sample 
$\{x_1 < \ldots < x_n\}$ 
such that its relative position of the maximum likelihood estimate is $\xi$. \\
(ii) Let $n \ge 3$ be an odd number. 
Then, there exists $\epsilon_n > 0$ such that for every $\xi \in \H \cap B(i,\epsilon_n)$, 
there is no ordered sample $\{x_1 < \ldots < x_n\}$ such that its relative position of the maximum likelihood estimate is $\xi$. 
\end{Prop}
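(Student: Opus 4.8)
The plan is to exploit the affine invariance of the relative position. Under $x_j\mapsto\alpha x_j+\beta$ with $\alpha>0$, each factor $\Mob_\theta(x_j)$ in \eqref{eq:Cauchy-likelihood-equation4} is unchanged when $\theta,\overline\theta$ are transformed simultaneously, so $\hat\theta\mapsto\alpha\hat\theta+\beta$ and $\xi$ is invariant. I therefore normalize $x_1=-1$, $x_n=1$, so that $\xi=\hat\theta$ and the free data are the interior points $-1<x_2<\cdots<x_{n-1}<1$. Fix a target $\xi_0\in\H\cap\D$. Since $\frac{d}{dx}\arg\Mob_{\xi_0}(x)=2\Im\xi_0/|x-\xi_0|^2>0$, the map $\Mob_{\xi_0}$ sends $(-1,1)$ monotonically onto an open arc $\Gamma\subset\partial\D$ with endpoints $a=\Mob_{\xi_0}(-1)$ and $b=\Mob_{\xi_0}(1)$; after rotating so that the midpoint of $\Gamma$ lies at angle $0$, the endpoints become $e^{\pm i\lambda}$ and the interior images become $e^{is_j}$ with $s_j\in(-\lambda,\lambda)$, where $2\lambda$ is the length of $\Gamma$. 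By Corollary \ref{cor:Cauchy-likelihood-equation4} together with the uniqueness in Theorem \ref{main}, $\xi_0$ is the relative position of such a sample if and only if there are ordered $s_2<\cdots<s_{n-1}$ in $(-\lambda,\lambda)$ with $\sum_{j=2}^{n-1}e^{is_j}=-2\cos\lambda$. A direct computation gives $2\lambda=2\big(\arctan\frac{1-\Re\xi_0}{\Im\xi_0}+\arctan\frac{1+\Re\xi_0}{\Im\xi_0}\big)$, and the addition formula for $\arctan$ shows $\lambda>\pi/2$ exactly when $|\xi_0|<1$; hence $\lambda\in(\pi/2,\pi)$ for every $\xi_0\in\H\cap\D$, with $\lambda\to\pi/2^+$ as $\xi_0\to i$. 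In particular $\cos\lambda<0$, so the target $-2\cos\lambda$ is a positive real.

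For (i), write $k:=n-2=2m$ (even) and seek an antisymmetric configuration $\{\pm t_1,\dots,\pm t_m\}$ with $0<t_1<\cdots<t_m<\lambda$. The imaginary part of $\sum e^{is_j}$ then vanishes automatically, and the requirement reduces to $\sum_{p=1}^m\cos t_p=-\cos\lambda$. Each $\cos t_p$ ranges over $(\cos\lambda,1)$, and $-\cos\lambda$ lies strictly between $m\cos\lambda$ and $m$ (since $\cos\lambda<0$ and $-\cos\lambda<1\le m$), so distinct such $t_p$ exist. Pulling the corresponding points back through $\Mob_{\xi_0}^{-1}$ yields an ordered sample realizing the target, so every $\xi_0\in\H\cap\D$ occurs; this gives (i).

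For (ii) I have $k=n-2$ odd and argue by contradiction for $\xi_0$ near $i$. Put $\delta:=\lambda-\pi/2$, so $\delta\to0^+$ as $\xi_0\to i$. The real part of the equation is $\sum_j\cos s_j=-2\cos\lambda=2\sin\delta$; as $\cos s_j\ge\cos\lambda=-\sin\delta$, this small positive total forces $\sum_{|s_j|<\pi/2}\cos s_j\le(k+2)\sin\delta$, whence every $s_j$ lies within $O(\delta)$ of $+\pi/2$ or of $-\pi/2$, and for small $\delta$ these two clusters are disjoint. Let $p,q$ be the numbers of indices in each cluster, $p+q=k$. Since $\sin s_j=\pm1+O(\delta^2)$ there, the imaginary part $\sum_j\sin s_j=0$ gives $|p-q|\le C_n\delta^2$, so $p=q$ once $\delta$ is small. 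This contradicts $p+q=k$ being odd; choosing $\epsilon_n$ so small that $\xi_0\in B(i,\epsilon_n)\cap\H$ forces $\delta$ below the threshold proves (ii).

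The step I expect to be the main obstacle is the quantitative forcing in (ii): making the ``every $s_j$ near $\pm\pi/2$'' clustering uniform over all admissible samples and controlling the $O(\delta)$ and $O(\delta^2)$ constants (which depend on $n$) precisely enough to deduce $p=q$ exactly, together with the clean verification that $\lambda\in(\pi/2,\pi)$ on the whole half-disk. By contrast, once the arc-length identity is in hand the construction in (i) is routine.
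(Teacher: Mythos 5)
Your proof is correct and follows essentially the same route as the paper: both parts transfer the problem to the unit circle via $\Mob_{\xi}$ and the likelihood equation \eqref{eq:Cauchy-likelihood-equation4}, exploiting that the image arc of $[-1,1]$ exceeds a semicircle exactly when $|\xi|<1$ to build a cancelling configuration in (i), and the parity obstruction for an odd number of points forced to cluster near the two ends of a near-semicircle in (ii). The quantitative step you flag as the main obstacle is unproblematic since all constants depend only on $n$ and on $\delta=\lambda-\pi/2$ (and the case $\delta\le 0$ is immediate because then every $\cos s_j>0$); the paper handles the same point by proving a uniform lower bound on $\left|\sum_j \Mob_i(x_j)\right|$ at $\xi=i$ and then perturbing in $\xi$.
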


\begin{proof}

(i) 
We will construct an ordered sample $\{x_1, \dots, x_n\}$ such that $x_1 = -1$ and $x_n = 1$. 
Let $\xi \in \H$.  
Then, it holds that for some $0 < \psi_1 < \psi_2 < 2\pi$, 
$\left\{\frac{t - \xi}{t - \overline{\xi}} : t \in [-1,1] \right\} = \{e^{i\psi} : \psi \in [\psi_1, \psi_2]\}$.

If $\xi \in \H \cap \D$, then, $-\Re\left(\frac{-1 - \xi}{-1 - \overline{\xi}}\right) \le \Re\left(\frac{1 - \xi}{1 - \overline{\xi}}\right)$ and hence, $\psi_2 - \psi_1 > \pi$. 
Now we can choose distinctive $n$ points $y_1, \dots, y_{n/2}$ and $z_1, \dots, z_{n/2}$ in $[-1,1]$ such that 
$\frac{y_j  - \xi}{y_j - \overline{\xi}} + \frac{z_j  - \xi}{z_j - \overline{\xi}} = 0, \ 1 \le j \le n/2,$ 
and $y_1 = -1$ and $z_{n/2} = 1$. 
Let $(x_1, \dots, x_n) = (y_1, \dots, y_{n/2}, z_1, \dots, z_{n/2})$. 
Then, from Corollary \ref{cor:Cauchy-likelihood-equation4} and $x_1 = y_1 = -1$ and $x_n = z_{n/2} = 1$, 
we see that 
$$\xi =  \hat{\theta}(y_1, \dots, y_{n/2}, z_1, \dots, z_{n/2}) = \frac{2\hat{\theta}(x_1, \dots, x_n) - (x_n + x_1)}{x_n - x_1}.$$

(ii) 
We remark that $\frac{-1 - i}{-1 + i}  = i, \ \frac{1 - i}{1 + i}  = -i,$
and $\left\{\frac{t - i}{t + i} : t \in [-1,1] \right\} = \left\{e^{i\psi} : \psi \in [\pi/2, 3\pi/2]\right\}$. 
Let $\frac{x_j - i}{x_j + i} = \alpha_j + i\beta_j, \ \ 1 \le j \le n$.
Then, by using the fact that $n$ is odd, there exists $\delta_n > 0$ such that 
for each $-1 = x_1 < \dots < x_n = 1$ 
it holds that either 
$\min_{j} \alpha_j \le -\delta_n$ 
or 
$\left|\beta_2 + \dots + \beta_{n-1}\right| \ge \delta_n$.  
Hence, it holds that $\left| \sum_{j=1}^{n} \frac{x_j - i}{x_j + i} \right| \ge \delta_n$ 
 for every $-1 = x_1 < \dots < x_n = 1$.  
Since 
\[  \left| \frac{a - \xi}{a - \overline{\xi}} - \frac{a - w}{a - \overline{w}}\right| \le \frac{2|\theta-w|}{|w|}, \  a \in \R, \xi, w \in \H,\]
we see that 
there exists $\epsilon_n > 0$ such that for every $\xi \in \H \cap B(i,\epsilon_n)$ and every $-1 = x_1 < \dots < x_n = 1$, 
$\left| \sum_{j=1}^{n} \frac{x_j - \xi}{x_j + \xi} \right| \ge \delta_n/2$.
\end{proof}

\subsection{Symmetry of the cumulative distribution function}

Now let us restrict ourselves to the case that $n = 3$ and $4$.
We are concerned with some symmetric properties
of the cumulative distribution function of the Cauchy distribution
whose parameter is the maximum likelihood estimate $\hat{\theta}$.
Using the probability distribution $f$ of \eqref{eq:Cauchy-density},
let us define $F(x;\theta) := \int_{-\infty}^{x} f(y;\theta)\,dy$.

\begin{Prop}
Let $\hat{\theta}$ be the maximum likelihood estimate of an ordered sample 
$\{x_1, x_2, \ldots, x_n\}$. 
Then, \\
(i) For $n=3$, $F(x_3;\hat{\theta}) + F(x_1; \hat{\theta}) = 2F(x_2;\hat{\theta})$.\\
(ii) For $n=4$, $F(x_3;\hat{\theta}) - F(x_1; \hat{\theta}) = F(x_4;\hat{\theta}) - F(x_2;\hat{\theta}) = 1/2$.
\end{Prop}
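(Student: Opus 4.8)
The plan is to exploit the likelihood equation in the form of Corollary~\ref{cor:Cauchy-likelihood-equation4} together with a direct dictionary between the M\"obius images $\Mob_{\hat\theta}(x_j)$ and the distribution-function values $F(x_j;\hat\theta)$. The starting point is the identity
\[ \Mob_\theta(x) = e^{2\pi i F(x;\theta)}, \qquad x \in \R,\ \theta = \mu + i\sigma \in \H. \]
To obtain it I would write $t := (x-\mu)/\sigma$, so that $\Mob_\theta(x) = \frac{t-i}{t+i}$ is a point of modulus one whose argument equals $2\arctan t - \pi$. Since the Cauchy distribution function is $F(x;\theta) = \tfrac12 + \tfrac1\pi \arctan t$, one has $2\arctan t - \pi = 2\pi F(x;\theta) - 2\pi \equiv 2\pi F(x;\theta) \pmod{2\pi}$, which is the claimed identity. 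Feeding this into \eqref{eq:Cauchy-likelihood-equation4} at $\theta = \hat\theta$ turns the likelihood equation into
\[ \sum_{j=1}^n e^{2\pi i F(x_j;\hat\theta)} = 0. \]
Thus the $n$ points $\Mob_{\hat\theta}(x_j)$ are unit vectors with vanishing sum, and because $x \mapsto F(x;\hat\theta)$ is strictly increasing, their arguments $\psi_j := 2\pi F(x_j;\hat\theta)$ satisfy $0 < \psi_1 < \psi_2 < \cdots < \psi_n < 2\pi$. The whole proposition then becomes a statement about the geometry of finitely many unit vectors summing to zero.

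For $n = 3$ I would use the fact that three unit vectors summing to zero are necessarily the vertices of an equilateral triangle, i.e.\ of the form $\{c, c\omega, c\omega^2\}$ with $\omega = e^{2\pi i/3}$ and $|c| = 1$. Hence the three cyclic gaps between consecutive arguments all equal $2\pi/3$; in particular $\psi_2 - \psi_1 = \psi_3 - \psi_2 = 2\pi/3$, that is $F(x_2;\hat\theta) - F(x_1;\hat\theta) = F(x_3;\hat\theta) - F(x_2;\hat\theta) = 1/3$. Rearranging yields $F(x_1;\hat\theta) + F(x_3;\hat\theta) = 2F(x_2;\hat\theta)$, which is (i).

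For $n = 4$ I would first prove that four unit vectors summing to zero form two antipodal pairs, i.e.\ the set $\{e^{i\psi_1}, \ldots, e^{i\psi_4}\}$ is invariant under $z \mapsto -z$. After rotating so that one vector equals $1$, writing $z_1 + z_2 = -(z_3+z_4)$ and comparing the moduli and arguments of the two sides forces the remaining two vectors to be the antipodes of the first two; undoing the rotation shows the set is symmetric about the origin. Since the set is $(+\pi)$-invariant and the $\psi_j$ are strictly increasing in $(0,2\pi)$, exactly two arguments lie in each half-circle, so $\psi_1,\psi_2 \in (0,\pi)$ and $\psi_3,\psi_4 \in (\pi,2\pi)$; matching each argument with its antipode by size then forces $\psi_3 = \psi_1 + \pi$ and $\psi_4 = \psi_2 + \pi$. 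Translating back gives $F(x_3;\hat\theta) - F(x_1;\hat\theta) = F(x_4;\hat\theta) - F(x_2;\hat\theta) = 1/2$, which is (ii).

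The analytic ingredient (the distribution-function identity) is immediate, so the only substantive work is the geometric classification of unit vectors with zero sum. The $n = 3$ case is standard, and I expect the main obstacle to be the $n = 4$ case: one must establish not merely that the four vectors split into two antipodal pairs, but that the antipodal partner of the $j$-th smallest argument is the $(j+2)$-th --- i.e.\ the interleaving pairing $(1,3),(2,4)$ rather than $(1,2),(3,4)$ or $(1,4),(2,3)$. It is precisely here that strict monotonicity of $F$ (equivalently, distinctness of the ordered sample) will be indispensable, since it guarantees the half-circle count and hence the correct pairing.
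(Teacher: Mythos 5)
Your proposal is correct and follows essentially the same route as the paper: both reduce the claim to the fact that the $n$ unit vectors $\Mob_{\hat\theta}(x_j)$ (equivalently $\exp(2ib_j)$ with $b_j=\arctan((x_j-\hat\mu)/\hat\sigma)$) sum to zero --- the paper's equation \eqref{eq:sum-zero} --- and then classify three, resp.\ four, unit vectors with vanishing sum as an equilateral triangle, resp.\ two interleaved antipodal pairs, using the ordering of the sample to pin down the pairing. Your derivation of the key identity via Corollary \ref{cor:Cauchy-likelihood-equation4} and $\Mob_\theta(x)=e^{2\pi i F(x;\theta)}$ is marginally cleaner than the paper's computation from the real-form likelihood equations, but the substance is identical.
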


\begin{proof}
We denote by $\hat{\mu}$ and $\hat{\sigma}$ the real and the imaginary part
of $\hat{\theta}$, respectively.
Let $b_j := \arctan\left(\frac{x_j - \hat{\mu}}{\hat{\sigma}}\right), \ 1 \le j \le n$. 
Since 
\[ \sum_{j=1}^{n} \frac{(x_j - \hat{\mu})^2 - \hat{\sigma}^2}{(x_j - \hat{\mu})^2 + \hat{\sigma}^2} = \sum_{j=1}^{n} \frac{(x_j - \hat{\mu}) \hat{\sigma}}{(x_j - \hat{\mu})^2 + \hat{\sigma}^2} = 0,\]
we can derive that $F(x_j; \hat{\theta}) = b_j, \ 1 \le j \le n$, and 
\begin{equation}\label{eq:sum-zero} 
\exp(2 i b_1) + \dots + \exp(2 i b_n) = 0, 
\end{equation}
where $ \exp(2 i b_1),  \dots, \exp(2 i b_n)$ are placed on the unit circle anticlockwise. 

(i) It suffices to show that $b_1 + b_3 = 2b_2$. 
By a rotation and $b_1 \ne b_3$, 
we assume that $\exp(2 i b_1) + \exp(2 i b_3) \le 0$. 
Then, $b_2 = 0$ and hence $\exp(2 i b_2) = 1$. 
Hence $\exp(2 i b_1) + \exp(2 i b_3) = -1$. 
Hence, 
$\exp(2 i b_3) = \exp(2\pi i /3)$ and $\exp(2 i b_1) = \exp(4\pi i /3)$.
Since $b_j \in (-\pi,\pi)$, we see that $b_3 = 2\pi i /3$ and $b_1 = -2\pi i /3$. 

(ii) 
Since $F(x_{j+2}; \hat{\theta}) - F(x_j; \hat{\theta}) = b_{j+2} - b_j, \ \ j = 1,2,$ 
it suffices to show that $b_{j+2} - b_j = \pi, \ \ j = 1,2$. 
By rotating the unit circle, 
we can assume that $\exp(2 i b_1) + \exp(2 i b_2) \le 0$. 
By \eqref{eq:sum-zero} and $b_1 < b_2 < b_3 < b_4$, 
we can assume that $\exp(2 i b_1) + \exp(2 i b_2) \in (-2, 0)$. 
Let $r := |\exp(2 i b_1) + \exp(2 i b_2)|$. 
Consider the distance $d_r (\eta)$ between $r$ and $\exp(i\eta)$ in the complex plane. 
We see that $d_r (\eta)^2 = r^2 + 1 - 2r\cos \eta$.
It is strictly decreasing on $(-\pi,0)$ and strictly increasing on $(0, \pi)$.  
Since $0 < r < 2$, it holds that $d_r (0) = r-1 < 1 < d_r (\pi) = r+1$.
Hence, there exists exactly one pair of points $(\eta_1, \eta_2)$ such that $\eta_1 +  \eta_2 = 0, -\pi/2 < \eta_1 < 0 < \eta_2 < \pi/2$ and $d_r (\eta_1) = d_r (\eta_2) = 1$. 
By the uniqueness and $b_3 < b_4$, 
we see that $\exp(2 i b_j) = \exp(i\eta_{j-2}) = - \exp(2 i b_{j-2}), \ j = 3,4$.
Since $2b_j \in (-\pi, \pi)$, 
we see that $b_{j+2} - b_j = \pi, \ \ j = 1,2$. 
\end{proof}

{\it Acknowledgments.} \ \ 
The first and second authors were supported by JSPS KAKENHI 19K14549 and 16K05196 respectively. 

\appendix
\section{Numerical computations}\label{sec:numeric}

This section is devoted to some numerical examples. 
Although there are several ways to compute the maximum likelihood estimates,
we give, in this appendix,
some singular examples such that most of those methods do {not} work well. 
Let us begin with summarizing some methods to compute the
maximum likelihood estimates.

First of all, the Newton--Raphson method is standard and classical.
It is used by \cite{Haas1970, Hinkley1978}.  
The EM algorithm, more specifically, the iteratively reweighted least squares method, is also a standard method. 
It is used by \cite{Auslan1995, Arslan1998, Dempster1977, Kent1991}. 
Furthermore, recently, several useful functions in R are provided
for numerical computations of the maximum likelihood estimate
of parametric statistical distributions.
Indeed, the \texttt{nlminb} function in R  was used by \cite{Kravchuk2012}.
This is a quasi Newton--Raphson method. 

Although there are many packages in R which can compute the maximum likelihood estimate of some parametric models, we focus on the ``\texttt{optimx}" and ``\texttt{maxLik}" packages. 
The \texttt{optimx} package contains 14 methods and the \texttt{maxLik} package contains 5 methods. 
Both packages contain the Broyden--Fletcher--Goldfarb--Shanno and the Nelder--Mead methods. 
The \texttt{optimx} package also contains the \texttt{nlminb} function, which is used by \cite{Kravchuk2012}. 
See \cite{Nash2011} for details of the \texttt{optimx} package. 
The \texttt{maxLik} package also contains the Newton--Raphson method and the simulated-annealing method.
See \cite{Henningsen2011} for details of  the \texttt{maxLik} package. 
It is true that these methods work well for non-singular samples and in particular the Newton--Raphson method yields very fast convergence in many cases, however, most of these methods, including the Newton--Raphson method, 
sometimes  diverge for singular samples. 
Examples \ref{exa:singu1} and \ref{exa:singu2} below deal with such cases.   

We can apply some  of the results in the above sections for numerical computations. 
As in Section \ref{sec:fixed}, an iterative scheme is derived from Theorem \ref{main} (iii). 
Contrary to the above case, this definitely converges to the maximum likelihood estimate exponentially fast, however, 
as is pointed out at Remark \ref{rem:may-be-slow},
the convergence could be very slow for singular samples. 
See Examples \ref{exa:singu1} and \ref{exa:singu2}. 

In Section \ref{sec:poly}, we characterized the maximum likelihood estimate
of a sample of size $n$ as a unique root of $R_n$ in $\H$. 
Since there are numerous studies for root-finding algorithms
for polynomials, we may make use of them to find the maximum likelihood
estimates.
They include the Jenkins--Traub method \cite{Jenkins1970}, the Aberth method \cite{Aberth1973}, and the Hirano method \cite{Murota1982} as a few examples.
In this section, we use the \texttt{polyroot} function in R, which uses the Jenkins--Traub algorithm, or the \texttt{Roots} function in Magma. 
One disadvantage of this approach is that $\deg (R_n)$ is large if $n$ is large. 
See Remark \ref{rem:n-is-large} below.

For the Newton--Raphson method and the iteration scheme, it is important to choose suitable starting points.  
Let $x_1 \le \dots \le x_n$ be a sample of size $n$. 
In this paper, we adopt  $\mathrm{median}(x_1, \dots, x_n) + i \mathrm{IQR}(x_1, \dots, x_n)$, 
where IQR is the interquartile range of a sample:
\begin{multline*}
\mathrm{IQR}(x_1, \dots, x_n) := \left( \left( j_2 + 1 - \frac{3n+1}{4} \right) x_{j_2} +  \left( \frac{3n+1}{4} - j_2 \right) x_{j_2 + 1} \right)\\
- \left( \left(  j_1 + 1 - \frac{n+1}{4} \right) x_{j_1} +  \left( \frac{n+1}{4} - j_1  \right) x_{j_1 + 1} \right),
\end{multline*}
where $j_1$ is the integer such that $(n-1)/4 < j_1 < (n+3)/4$ and 
$j_2$ is the integer such that $(3n-1)/4 < j_2 < (3n+3)/4$. 
This is the default setting of the IQR function in R. 
We denote it by $\widetilde{\z}$. 
This order statistical choice is also used in \cite{Hinkley1978}. 

\begin{remark}\label{rem:n-is-large}
If $n$ is large, then, $Q$, and hence $R_n$, could be very complicated, 
so finding roots of $R_n$ would not be a suitable method. 
However, $q$ is relatively simple, and we can use the definition of $Q$, that is,  $Q(\theta) = q(q(\theta))$. 
If $n$ is sufficiently large, then, with high probability, 
$\widetilde{\z}$ above and the one-step estimators of several $\sqrt{n}$-consistent estimators both approximate the maximum likelihood estimator well, 
so it is a candidate of good starting points. 
See \cite{Akaoka2021-2}. 
\end{remark}

The first example is easy to deal with. 

\begin{Exa}
Let $(x_1, x_2, x_3, x_4, x_5, x_6, x_7) = (-8,-5,-3, -1, 2,7,10).$ 
The following table shows the results for $Q^m(\tilde{z})$,
 $m = 3, 4, \ldots$.
We also show the result of the
Newton-Raphson method for the \texttt{maxLik} package after $5$ iterations,
\texttt{nlminb} function in R, and the root of $R_7$ in $\H$.
Thus, the maximum likelihood estimate is almost $-1.404 + 3.909 i$.

\begin{table}[H]
\begin{minipage}{0.47\hsize}
	\[
		\begin{array}{c|c}
		m      & Q^m(\tilde{z})      \\
		\hline
		3      & -1.404858+3.913066i \\
		4      & -1.40443+3.909587i \\
		5      & -1.404389+3.90925i \\
		\geq 6 & -1.404384+3.909214i
		\end{array}
	\]
\end{minipage}
\begin{minipage}{0.47\hsize}
\[
\begin{array}{c|c}
\text{Newton--Raphson} & -1.404 + 3.909i \\
\hline
\mathtt{nlminb} & -1.404 + 3.909i \\
\hline
\text{root of $R_7$} &-1.4043843+3.909214i
\end{array}
\]
\end{minipage}
\end{table}

\end{Exa}

The following two examples are somewhat singular. 

\begin{Exa}\label{exa:singu1}
Let $(x_1, x_2, x_3, x_4) = (-10065, -8678, -6, 0)$.
Then, by the closed-form formula \eqref{eq:closed4}, 
the maximum likelihood estimate is $-43.3525+611.8279i$. 
However, the convergence of our iterative scheme is very slow.
\begin{table}[H]
	\[
		\begin{array}{c|c}
		m      & Q^m(\tilde{z})      \\
		\hline
		100      & -1339.32+3784.915i \\
		\text{1,000} & -197.044+1382.602i \\
		\text{10,000} & -45.0412+625.3293i \\
		\text{100,000} & -43.3525+611.8279i
		\end{array}
	\]
% 	\caption{iterative scheme}
% 	\label{tab:singu1}
\end{table}

For this example, the methods indicated above other than Nelder--Mead's do not work,
while the Nelder--Mead's output after 187 iterations is $-41.35 + 597.33i$. 
It seems that one of the reasons why many methods do not work well is that the starting point $\widetilde \z$ is badly chosen. 
We see this by the fact that the relative position of the maximum likelihood estimate is $0.9913855+0.1215753i$, which is very close to the boundary of $\H \cap \D$. 
Indeed, the relative distance to the boundary is 0.0012. 
\end{Exa}

\begin{Exa}\label{exa:singu2}
Let $(x_1, x_2, x_3, x_4, x_5, x_6) = (-10000000, -9000000, 0, 1, 10, 100000)$.
We do not have a closed-form formula for the maximum likelihood estimate.
\begin{table}[H]
	\[
		\begin{array}{c|c}
		m              & Q^m(\tilde{z})      \\
		\hline
		\text{1,000}   & 22.278+2360.648i \\
		\text{10,000}  & 6.958+1003.648i \\
		\text{100,000} & 6.7468+971.561i 
		\end{array}
	\]
% 	\caption{iterative scheme}
% 	\label{tab:singu2}
\end{table}

The Nelder--Mead method after 135 iterations outputs $10.080 + 1401.651i$. 
The methods indicated above other than the Nelder-Mead method does not work well at all. 
By finding roots of the polynomial $R_6$ using
the \texttt{Roots} function of Magma, 
the unique solution of $R_6$ in $\H$ is equal to $ 6.7468+971.5610i$. 
The relative position of the maximum likelihood estimate is $0.9891102+0.0001924i$, and the relative distance to the boundary is 0.0109. 
\end{Exa}

We finally consider the case of 15 observations of the vertical semi-diameter of Venus, which was examined by Rublik \cite{Rublik2001}, see also \cite{Kravchuk2012}.

\begin{Exa}\label{exa:singu3}
Let 
\begin{align*}
(x_1, \dots, x_{15}) = 
	(&-1.4, -0.44, -0.3, -0.24, -0.22, -0.13, -0.05,\\
	 & 0.06,  0.1,  0.18,  0.2,  0.39,  0.48,  0.63,  1.01). 
\end{align*}
Then, the initial value $\tilde{z} = 0.06+0.525i$. 
\begin{table}[H]
	\[
		\begin{array}{c|c}
		m              & Q^m(\tilde{z})      \\
		\hline
		\text{2} & 0.0269864+0.2618557i \\
		\text{4} & 0.0267463+0.2613197i \\
		\text{6, 8, 10} & 0.0267456+0.2613
		\end{array}
	\]
\end{table}
Many but not all methods in the \texttt{maxLik} and \texttt{optimx}
packages work well for this example;
the \texttt{nlminb} function returns $0.02674557+0.2613182i$. 
\end{Exa}

\end{document}